\newtheorem{theorem}{Theorem}[section]
\newtheorem{proposition}{Proposition}[section]
\newtheorem{remark}{Remark}[section]
\newtheorem{lemma}{Lemma}[section]
\numberwithin{equation}{section}
\def\p{\partial}
\def\d{\mathrm{d}}
\def\dr{\mathbf{d}}
\def\no{\nonumber}
\def\eps{\varepsilon}
\def\div{\mathrm{div}}
\def\u{\mathbf{u}}
\def\A{\mathbf{A}}
\def\B{\mathbf{B}}
\def\D{\mathbf{D}}
\def\S{\mathcal{S}}
\def\R{\mathbb{R}}
\def\dre{\mathbf{d}^\epsilon_{R}}
\def\w{\mathbf{w}}
\def\be{\begin{equation}}
\def\ee{\end{equation}}
\def\bea#1\eea{\begin{align}#1\end{align}}
\newcounter{wronumber}\setcounter{wronumber}{1}
\begin{document}
\title[Wave map limit]
			{A scaling limit from the wave map\\ to  the heat flow into $\mathbb{S}^2$}

\author[N. Jiang]{Ning Jiang}
\address[Ning Jiang]{\newline School of Mathematics and Statistics, Wuhan University, Wuhan, 430072, P. R. China}
\email{njiang@whu.edu.cn}

\author[Y.-L. Luo]{Yi-Long Luo}
\address[Yi-Long Luo]
		{\newline School of Mathematics and Statistics, Wuhan University, Wuhan, 430072, P. R. China}
\email{yl-luo@whu.edu.cn}

\author[S. J. Tang]{Shaojun Tang}
\address[Shaojun Tang]
       {\newline School of Mathematics and Statistics, Wuhan University, Wuhan, 430072, P. R. China}
       \email{shaojun.tang@whu.edu.cn}

\author[A. Zarnescu]{Arghir Zarnescu}
\address[Arghir Zarnescu]
        {\newline BCAM, Basque Center for Applied Mathematics, Mazarredo 14, 48009 Bilbao, Spain}
        \email{azarnescu@bcamath.org}

\thanks{ \today}

\maketitle

\begin{abstract}
 In this paper we study a limit connecting a scaled wave map with the heat flow into the unit sphere $\mathbb{S}^2$. We show quantitatively how  that the two equations are connected by means of  an initial layer correction.
 This limit is motivated as a first step into  understanding the limit of zero inertia for the hyperbolic-parabolic Ericksen-Leslie's liquid crystal model. \end{abstract}





\section{Introduction}

\subsection{Wave map and heat flow}
We consider a hyperbolic system for functions $\dr:\mathbb{R}^+\times\mathbb{R}^3\to \mathbb{S}^2$:
\begin{equation}\label{WM-0}
  \p_t \dr = -\square \dr + (|\nabla \dr|^2-|\p_t \dr|^2) \dr\,,
\end{equation} subject to  initial data: for any $x\in \mathbb{R}^3$,
\begin{equation}
  \dr|_{t=0} = \dr^{0}(x) \in \mathbb{S}^2 \, , \ \partial_t \dr|_{t=0} = \tilde{\dr}^{0} (x) \in \mathbb{R}^3\,,\dr^{0}(x)\cdot \tilde{\dr}^{0}(x)= 0\,,
\end{equation}
where $\square = \p_{tt}- \Delta$ is the standard wave operator, and the compatibility condition $\dr^{0}\cdot \tilde{\dr}^{0}= 0$ on the initial data is due to the fact that $|\dr^0|\equiv 1$.

\bigskip
 The system \eqref{WM-0} is a wave map from $\mathbb{R}^3$ to the unit sphere $\mathbb{S}^2$, with a damping term $\p_t \dr$. One way of interpreting this system is as follows: setting the righthand side of \eqref{WM-0} equal to 0, we obtain $\square \dr = (|\nabla \dr|^2-|\p_t \dr|^2) \dr$. This  is the well-known {\it wave map}, which can be characterized variationally as a critical point of the functional
\begin{equation}\label{WM-functional}
  \mathcal{A}(\dr) = \frac{1}{2}\iint (|\nabla \dr|^2-|\p_t \dr|^2)\,\d x\d t\,,
\end{equation}
among maps $\dr$ satisfying the target constraint, $\dr: \mathbb{R}^+\times \mathbb{R}^3 \rightarrow \mathbb{S}^2$. Thus the full system \eqref{WM-0} can be viewed as a ``gradient flow" of the functional \eqref{WM-functional}.

\bigskip Another gradient flow can be obtained by formally dropping some terms out of the previous system, and obtaining the {\it heat flow}

\begin{equation}\label{Heat-F}
  \p_t \dr = \Delta \dr + |\nabla \dr|^2 \dr\,.
\end{equation}

Similarly as before setting the right-hand side equal to zero we obtain the equations for  the harmonic map from $\mathbb{R}^3$ to the unit sphere $\mathbb{S}^2$ namely
\begin{equation}\label{HM}
  \Delta \dr + |\nabla \dr|^2 \dr =0\,,
\end{equation}
which is a critical point of the energy functional
\begin{equation}\label{HM-functional}
  E(\dr) = \frac{1}{2} \int |\nabla \dr|^2\, \d x\,.
\end{equation}


There exist deep relations between the two systems, \eqref{WM-0} and \eqref{Heat-F} and one way to see this is by
considering the following parabolic scaling:
\begin{equation}
  \dr^\eps(t,x) := \dr(\tfrac{t}{\eps}, \tfrac{x}{\sqrt{\eps}}),
\end{equation}

Then, $\dr^\eps$ satisfies the following scaled wave map:
\begin{equation}\label{Wave-Map}
        \partial_t \dr^\eps = -(\eps \partial_{tt} -\Delta) \dr^\eps + (  |\nabla \dr^\eps|^2- \eps |\partial_t \dr^\eps|^2 ) \dr^\eps \,,
\end{equation}
on $\R^+ \times \R^3$.
For this scaled system we take  the initial values independent of $\varepsilon$, namely:
\begin{equation}\label{IC}
  \dr^\eps \big{|}_{t=0} = \dr^{in}(x) \in \mathbb{S}^2 \, , \ \partial_t \dr^\eps \big{|}_{t=0} =\tilde\dr^{in}(x) \in \mathbb{R}^3\,.
\end{equation}

The finite-time behaviour of the limit $\eps\to 0$ for the system \eqref{Wave-Map} with initial data \eqref{IC} is the focus of this paper.   It is easy to see that letting $\eps = 0$ in \eqref{Wave-Map} will formally give the heat flow \eqref{Heat-F}. However, a refined analysis and the introduction of an {\it initial layer}   is needed in order to overcome the singular character of this limit and understand the relationship between the system \eqref{Wave-Map} and its formal limit, as it will be seen in the Theorem~\ref{Main-Thm} below.

\medskip
{\bf Notations and conventions:} Throughout this paper, we use the following standard notations:
$$ |\dr|^p_{L^p} = \int_{\R^3} |\dr|^p \d x \, , \ |\dr|_{H^k} = \sum_{\gamma \leq k} |\nabla^\gamma \dr|_{L^2} \, , \ |\dr|_{\dot{H}^k} = \sum_{1 \leq \gamma \leq k} |\nabla^\gamma \dr|_{L^2} \, .$$
Additionally, for the Hilbert space $L^2 \equiv L^2(\d x, \R^3)$, we use the following notation to denote the standard inner product:
$$ \left\langle f, g \right\rangle = \int_{\R^3} f g \d x \, .$$
Furthermore, if there is a generic constant $C>0$ such that the inequality $f(t) \leq C g(t)$ holds for all $t \geq 0$, we denote this inequality by
$$ f \lesssim g \, . $$

\subsection{Ericksen-Leslie's hyperbolic liquid crystal model}

Our motivation for considering the previously mentioned  limit comes from the hydrodynamic theory of nematic liquid crystals.

The most widely accepted equations of nematics were proposed   by Ericksen \cite{Ericksen-1961-TSR, Ericksen-1987-RM, Ericksen-1990-ARMA} and Leslie \cite{Leslie-1968-ARMA, Leslie-1979} in the 1960's (see for more details Section 5.1 of \cite{Lin-Liu-2001} ). The general hyperbolic-parabolic Ericksen-Leslie system consists of an equation for velocity $u$ of the centers of mass of the rod-like molecules, coupled with an equation for the direction $d$ of these molecules. More specifically we have the following equations (in non-dimensional form):
\begin{equation}\label{ParaHyper-LC-general}
  \begin{aligned}
    \left\{ \begin{array}{c}
      \partial_t \u + \u \cdot \nabla \u - \frac{1}{2} \mu_4 \Delta \u + \nabla p = - \div (\nabla \dr \odot \nabla \dr) + \div \tilde{\sigma}\, , \\
      \div \u = 0\, ,\\
     \eps D_\u^2 \dr = \Delta \dr + \gamma \dr + \lambda_1 (D_\u \dr- \B \dr) + \lambda_2 \A \dr
    \end{array}\right.
  \end{aligned}
\end{equation}
on $\R^+ \times \R^3$ with constraint $|\dr|=1$, where $\A = \frac{1}{2} (\nabla \u + \nabla \u^\top)$ and $\B = \frac{1}{2} (\nabla \u - \nabla \u^\top) $, $D_\u f = \partial_t f + \u \cdot \nabla f$ is the material derivative of $f$ respect to the vector $\u$, $D_\u^2 \dr = \partial_t D_\u \dr + \u \cdot \nabla D_\u \dr$. The  Lagrangian $\gamma$ that enforces the unit-length constraint $|\dr|=1$ is given by
$$\gamma \equiv \gamma (\u, \dr, D_\u \dr) = - \eps |D_\u \dr|^2 + |\nabla \dr|^2 - \lambda_2 \dr^\top \A \dr \, . $$
The stress tensor appearing in the equation for $u$  is given by:
\begin{align*}
   \no \tilde{\sigma}_{ij} \equiv \big{(}\tilde{\sigma}(\u, \dr, D_\u \dr)\big{)}_{ij}  = & \mu_1 \dr_k \dr_p \A_{kp} \dr_i \dr_j + \mu_2 \dr_j ((D_\u \dr)_i + \B_{ki} \dr_k)  \\
   &+ \mu_3 \dr_i ((D_\u \dr)_j + \B_{kj} \dr_k)    +  \mu_5 \dr_j \dr_k \A_{ki} + \mu_6 \dr_i \dr_k \A_{kj} \, .
\end{align*}

 The constant $\eps > 0$ measures the inertial effects. The constants $\mu_i$ $(1 \leq i \leq 6)$ are known as Leslie coefficients and one has  $\mu_4 > 0$. Furthermore, we have:
$$ \lambda_1 = \mu_2 - \mu_3\, , \lambda_2 = \mu_5 - \mu_6 \, , \mu_2 + \mu_3 = \mu_6 - \mu_5\, , $$
where the last relation is called {\em Parodi } relation. For the more background and derivation of \eqref{ParaHyper-LC-general}, see \cite{Leslie-1968-ARMA} and \cite{Jiang-Luo-2017}.

For any fixed $\eps > 0$, in \cite{Jiang-Luo-2017} the first two named authors of the current paper proved the local well-posedness of the system \eqref{ParaHyper-LC-general} under assumptions on the Leslie coefficients which ensure the dissipativity of the basic energy law, and global well-posedness with small initial data under further damping effect, i.e. $\lambda_1 <0$.

 As noted in the ``Conclusion" section of \cite{Jiang-Luo-2017},  the inertial constant $\eps>0$ is, physically,    in most common non-dimensionalisations and materials,  very small. Formally, letting $\eps=0$ will give the parabolic Ericksen-Leslie system which is basically a coupling of Navier-Stokes equations and an extension of the heat flow to the unit sphere. However it is a very challenging task to  obtaining  estimates uniform in $\eps$ for the full system \eqref{ParaHyper-LC-general}, in order to understand the limit  $\eps\rightarrow 0$. In the current paper, the problem we consider what appears as a  simple instance of this general problem, namely the case where the bulk velocity $\u= \mathbf{0}$ and the coefficient $\lambda_1=-1$ in \eqref{ParaHyper-LC-general}. For this case, the system \eqref{ParaHyper-LC-general} is reduced to the scaled wave map \eqref{Wave-Map}, i.e. the wave map \eqref{Wave-Map} with a damping can be regarded as an Ericksen-Leslie's  liquid crystal flow unaffected by the fluid velocity.

\subsection{Initial layer and the main result}
As mentioned in the previous  two subsections the formal limit of the equation \eqref{Wave-Map}, obtained by setting $\eps=0$ is provided by the heat flow for functions  with values into  $\mathbb{S}^2$:

\begin{equation}\label{HF}
      \partial_t \dr_0 = \Delta \dr_0 + |\nabla \dr_0|^2 \dr_0 \, , \dr_0 \in \mathbb{S}^2 \,,
\end{equation}

The limit we consider is a singular limit, as the character of the equations changes, from a hyperbolic-type system for $\eps>0$ to a parabolic system for $\eps=0$. An immediate manifestation of the difference between the two types of equations is related to the initial conditions, which for the limit equation take the form:

\begin{equation}\label{IC-HF}
  \dr_0 \big{|}_{t=0} = \dr^{in}(x) \in \mathbb{S}^2\,.
\end{equation}

Thus, we note that  the wave map is a system of hyperbolic equations with two initial conditions, while the heat flow is a parabolic system with only one initial condition. Usually the solution of the heat flow does not satisfy the second initial condition in \eqref{IC}. This disparity between the initial conditions of the wave map \eqref{Wave-Map} and of the heat flow \eqref{HF} indicates that in one should expect an  ``initial layer" in time, appearing in the limiting process $\eps\rightarrow 0$. A formal derivation (postponed for later, in Section \ref{Sct-Formal-Anly}) indicates that this should be of the form:

\begin{equation}\label{IL-Construction}
  \begin{aligned}
    \dr_0^I \big{(} \tfrac{t}{\eps} , x \big{)} = & - \eps ( \tilde{\dr}^{in} (x) - \partial_t \dr_0 (0,x) ) \exp ( - \tfrac{t}{\eps} )\\
    = & - \eps ( \tilde{\dr}^{in} (x) - \Delta \dr^{in}(x) - |\nabla \dr^{in}(x)|^2 \dr^{in}(x) ) \exp ( - \tfrac{t}{\eps} ) \\
    \equiv & - \eps \D (x) \exp ( - \tfrac{t}{\eps} ) \, ,
  \end{aligned}
\end{equation}
where $\D(x)$ is defined as
$$ \D(x) \equiv \tilde{\dr}^{in} (x) - \Delta \dr^{in}(x) - |\nabla \dr^{in}(x)|^2 \dr^{in}(x) \,.$$

Our study of the limit from the wave map \eqref{Wave-Map} to the heat flow \eqref{HM} is inspired by the classical approach of Caflisch on the compressible Euler limit of the Boltzmann equation \cite{Caflisch}. This approach is based on the Hilbert expansion in which the leading term is given by solutions of the limit equation. The Caflisch's approach assumes that a solution of the limiting equation (which in our case  is the heat flow \eqref{HM}) is known beforehand. Then the solution to the original equation (which in our case is the wave map \eqref{Wave-Map}) can be constructed around the limiting equation with perturbations as  expansions in powers of $\eps$. Based on the arguments above and the formal analysis in Section~\ref{Sct-Formal-Anly}, in the expansions, besides the heat flow, the leading term should also include an initial layer. More specifically, we take the following ansatz of  the solution $\dr^\eps$ to the system \eqref{Wave-Map}:
\begin{equation}\label{Ansatz-3}
  \dr^\eps(t,x) = \dr_0 (t,x) + \dr_0^I \big{(} \tfrac{t}{\eps} , x \big{)} + \sqrt{\eps} \dr_R^\eps(t,x) \, ,
\end{equation}
where $\dr_0(t,x)$ obeys the heat flow \eqref{HF} and the initial layer $\dr_0^I \big{(} \tfrac{t}{\eps} , x \big{)} $ is defined in \eqref{IL-Construction}. Plugging \eqref{Ansatz-3} into the system \eqref{Wave-Map}, the remainder term $\dr_R^\eps (t,x)$ must satisfy the system
\begin{equation}\label{Remainder-Eq}
  \partial_{tt} \dr_R^\eps + \tfrac{1}{\eps} \partial_t \dr_R^\eps - \tfrac{1}{\eps} \Delta \dr_R^\eps = \mathcal{S} (\dr_R^\eps) + \mathcal{R} (\dr_R^\eps)
\end{equation}
with the initial conditions
\begin{equation}\label{IC-Remainder}
  \dr_R^\eps(0,x) = \sqrt{\eps} \D(x) \, , \ \partial_t \dr_R^\eps (0,x) = 0 \, ,
\end{equation}
where the singular term $\mathcal{S} (\dr_R^\eps)$ is
\begin{equation*}
  \begin{aligned}
    \mathcal{S} (\dr_R^\eps) = & - \tfrac{1}{\sqrt{\eps}} \partial_{tt} \dr_0 - \tfrac{1}{\sqrt{\eps}} \Delta \D (x) \exp (- \tfrac{t}{\eps}) + \tfrac{1}{\eps} |\nabla \dr_0|^2 \dr_R^\eps + \tfrac{1}{\sqrt{\eps}} |\nabla \dr_R^\eps|^2 \dr_0 \\
    &  - \tfrac{1}{\sqrt{\eps}} | \partial_t \dr_0 + \D (x) \exp(-\tfrac{t}{\eps}) |^2 \dr_0   - \tfrac{1}{\sqrt{\eps}} |\nabla \dr_0|^2 \D (x) \exp(- \tfrac{t}{\eps}) \\
    & + \tfrac{2}{\sqrt{\eps}} ( \nabla \dr_0 \cdot \nabla \dr_R^\eps ) \dr_R^\eps - \tfrac{2}{\sqrt{\eps}} ( \nabla \dr_0 \cdot \nabla \D (x) ) \exp( - \tfrac{t}{\eps} ) \dr_0 + \tfrac{2}{\eps} (\nabla \dr_0 \cdot \nabla \dr_R^\eps) \dr_0 \, ,
  \end{aligned}
\end{equation*}
and the regular term $ \mathcal{R} (\dr_R^\eps) $ is
\begin{equation*}
  \begin{aligned}
    \mathcal{R} (\dr_R^\eps) =& - \big{|} \partial_t \dr_0 + \D(x) \exp( - \tfrac{t}{\eps} ) + \sqrt{\eps} \partial_t \dr_R^\eps \big{|}^2 \big{[} - \sqrt{\eps} \D (x) \exp( - \tfrac{t}{\eps} ) + \dr_R^\eps \big{]} \\
    & - \big{[} 2 ( \partial_t \dr_0 + \D(x) \exp( - \tfrac{t}{\eps} ) ) \cdot \p_t \dr_R^\eps + \sqrt{\eps} |\partial_t \dr_R^\eps|^2 \big{]} \dr_0 - 2 ( \nabla \D(x) \cdot \nabla \dr_R^\eps ) \exp( - \tfrac{t}{\eps} ) \dr_0 \\
    & + \sqrt{\eps} |\nabla \D(x)|^2 \exp( - \tfrac{2t}{\eps} ) \dr_0 + |\nabla \dr_R^\eps|^2 \dr_R^\eps - 2 (\nabla \dr_0 \cdot \nabla \D(x)) \exp( - \tfrac{t}{\eps} ) \dr_R^\eps \\
    & - 2 ( \nabla \dr_0 \cdot \nabla \dr_R^\eps ) \D(x) \exp( - \tfrac{t}{\eps} ) - 2 \sqrt{\eps} (\nabla \D(x) \cdot \nabla \dr_R^\eps) \exp( - \tfrac{t}{\eps} ) \dr_R^\eps \\
    & - \sqrt{\eps} |\nabla \dr_R^\eps|^2 \D(x) \exp( - \tfrac{t}{\eps} ) + \eps |\nabla \D(x)|^2 \exp( - \tfrac{2t}{\eps} ) \dr_R^\eps - \eps^\frac{3}{2} |\nabla \D(x)|^2 \exp( - \tfrac{3t}{\eps} ) \D(x) \\
    & + 2 \sqrt{\eps} (\nabla \dr_0 \cdot \nabla \D(x) ) \exp( - \tfrac{2t}{\eps} ) \D(x) + 2 \eps ( \nabla \D(x) \cdot \nabla \dr_R^\eps ) \exp( - \tfrac{2t}{\eps} ) \D(x) \, .
  \end{aligned}
\end{equation*}

According to Eells-Sampson's classical result in \cite{Eells-Sampson-AJM1964}, for the heat flow \eqref{HF} on the unit sphere $\mathbb{S}^2$, one can have the following results of local well-posedness:
\begin{proposition}\label{Prop-HF}
  For any given $\dr^{in} \in \mathbb{S}^2$ satisfying $ \dr^{in} \in \dot{H}^k (\R^3)$ for any integer $k > 2$, there exists a time $T = T(|\dr^{in}|_{\dot{H}^k}) > 0$ such that \eqref{HF} admits a unique classical solution $\dr_0 \in L^\infty(0,T; \dot{H}^k) \cap L^2 (0,T;\dot{H}^{k+1})$. Moreover, there is a constant $C^* = C^*(|\dr^{in}|_{\dot{H}^k} , T ) > 0$ such that the solution $\dr_0$ satisfies
  \begin{equation*}
    |\dr_0|^2_{L^\infty(0,T; \dot{H}^k)} + |\nabla \dr_0|^2_{L^2(0,T; H^k)} \leq C^*\, .
  \end{equation*}
\end{proposition}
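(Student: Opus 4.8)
The plan is to treat \eqref{HF} as a semilinear heat equation for the $\R^3$-valued map $\dr_0$ and run a standard energy method, with two structural remarks guiding the choice of spaces. Since $\dr_0$ takes values in $\mathbb{S}^2$ it does not decay, so one is forced into homogeneous spaces; with the convention $|\dr_0|_{\dot H^k}=\sum_{1\le\gamma\le k}|\nabla^\gamma\dr_0|_{L^2}$ the genuine unknown is $\nabla\dr_0\in H^{k-1}(\R^3)$, and because $k>2$ (so $k-1>\tfrac32$) the space $H^{k-1}$ is a Banach algebra embedding in $L^\infty$, which makes any solution in this class classical and gives $|\nabla\dr_0|_{L^\infty}\lesssim|\dr_0|_{\dot H^k}$. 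The geometric constraint $|\dr_0|\equiv1$ (equivalently a uniform $L^\infty$ bound on $\dr_0$) is propagated by the flow but is cleanest to recover a posteriori; in the a priori estimates I would only use a uniform bound $|\dr_0|_{L^\infty}\le C$.

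\emph{Step 1: a priori estimates.} Applying $\nabla^\gamma$ for $1\le\gamma\le k$ to \eqref{HF}, pairing with $\nabla^\gamma\dr_0$ in $L^2$, integrating by parts in the Laplacian term and summing in $\gamma$ gives
\[
  \tfrac12\tfrac{\d}{\d t}|\dr_0|^2_{\dot H^k}+\sum_{1\le\gamma\le k}|\nabla^{\gamma+1}\dr_0|^2_{L^2}=\sum_{1\le\gamma\le k}\big\langle\nabla^{\gamma}\big(|\nabla\dr_0|^2\dr_0\big),\nabla^{\gamma}\dr_0\big\rangle .
\]
Moving one derivative off the first factor of each summand on the right rewrites the right-hand side as $-\sum_{1\le\gamma\le k}\big\langle\nabla^{\gamma-1}(|\nabla\dr_0|^2\dr_0),\nabla^{\gamma+1}\dr_0\big\rangle$, which by Cauchy--Schwarz and Young is at most $\tfrac12\sum_{1\le\gamma\le k}|\nabla^{\gamma+1}\dr_0|^2_{L^2}+C\,\big\||\nabla\dr_0|^2\dr_0\big\|^2_{H^{k-1}}$. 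A tame product estimate --- the algebra property of $H^{k-1}$ applied to the factor $|\nabla\dr_0|^2$, together with $|\dr_0|_{L^\infty}\le C$ --- bounds $\big\||\nabla\dr_0|^2\dr_0\big\|_{H^{k-1}}\lesssim(1+|\dr_0|_{\dot H^k})^3$, so after absorbing the first term into the dissipation one reaches $\tfrac{\d}{\d t}|\dr_0|^2_{\dot H^k}+\sum_{1\le\gamma\le k}|\nabla^{\gamma+1}\dr_0|^2_{L^2}\lesssim(1+|\dr_0|_{\dot H^k})^6$. A comparison with the associated ODE yields a time $T=T(|\dr^{in}|_{\dot H^k})>0$ and a constant $C^*$ with $|\dr_0|^2_{L^\infty(0,T;\dot H^k)}\le C^*$; integrating the inequality over $[0,T]$ controls $\sum_{1\le\gamma\le k}|\nabla^{\gamma+1}\dr_0|^2_{L^2(0,T;L^2)}$, and since the $\gamma=1$ term of $|\dr_0|_{\dot H^k}$ gives $|\nabla\dr_0|^2_{L^\infty(0,T;L^2)}\le C^*$, this is precisely the claimed $|\nabla\dr_0|^2_{L^2(0,T;H^k)}\le C^*$.

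\emph{Step 2: construction, constraint, uniqueness.} To make these bounds rigorous I would run an approximation scheme, e.g. the Picard--Duhamel iteration $\dr_0^{(n+1)}(t)=e^{t\Delta}\dr^{in}+\int_0^t e^{(t-s)\Delta}\big(|\nabla\dr_0^{(n)}|^2\dr_0^{(n)}\big)\,\d s$, and verify, using parabolic smoothing ($\big\|\int_0^t e^{(t-s)\Delta}F\,\d s\big\|_{L^\infty(0,T;\dot H^k)\cap L^2(0,T;\dot H^{k+1})}\lesssim\|F\|_{L^2(0,T;\dot H^{k-1})}$) together with $\big\||\nabla\dr_0|^2\dr_0\big\|_{\dot H^{k-1}}\lesssim(1+|\dr_0|_{\dot H^k})^3$, that for $T$ small (depending on $|\dr^{in}|_{\dot H^k}$) the iterates stay in a fixed ball of $C([0,T];\dot H^k)\cap L^2(0,T;\dot H^{k+1})$, carry a uniform $L^\infty$ bound coming from the Duhamel formula and $|\dr^{in}|_{L^\infty}=1$, and form a Cauchy sequence in $C([0,T];\dot H^{k-1})$; passing to the limit, with Aubin--Lions compactness for the quadratic gradient term, produces a solution of \eqref{HF} in the asserted class, classical by the embedding in Step~1. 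The constraint is then recovered by setting $\rho:=|\dr_0|^2-1$ and computing from \eqref{HF}
\[
  \p_t\rho-\Delta\rho=\big(2\dr_0\cdot\Delta\dr_0+2|\nabla\dr_0|^2|\dr_0|^2\big)-\big(2|\nabla\dr_0|^2+2\dr_0\cdot\Delta\dr_0\big)=2|\nabla\dr_0|^2\rho ,
\]
a linear parabolic equation with bounded coefficient $2|\nabla\dr_0|^2\in L^\infty((0,T)\times\R^3)$ and zero initial datum, so $\rho\equiv0$ and $\dr_0(t,\cdot)\in\mathbb{S}^2$. For uniqueness, two solutions with the same datum differ by $\w:=\dr_0^{(1)}-\dr_0^{(2)}$ solving $\p_t\w-\Delta\w=|\nabla\dr_0^{(1)}|^2\w+\big((\nabla\dr_0^{(1)}+\nabla\dr_0^{(2)})\cdot\nabla\w\big)\dr_0^{(2)}$ with $\w|_{t=0}=0$, and an $L^2$ energy estimate (using $|\nabla\dr_0^{(i)}|^2\in L^2\cap L^\infty$, $\w\in L^\infty$, Young to absorb $|\nabla\w|_{L^2}$) followed by Gronwall forces $\w\equiv0$.

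The hard part is the nonlinear estimate of Step~1: because $\dr_0$ lives only in a homogeneous space, the cubic term $|\nabla\dr_0|^2\dr_0$ resists a naive algebra bound, and one must integrate by parts so as never to create a factor $\nabla^{k+1}\dr_0$ outside the dissipation, then dispose of the remaining product via the algebra property of $H^{k-1}$, so that the energy inequality closes with exactly one copy of $|\nabla^{\gamma+1}\dr_0|_{L^2}$ on the left. The secondary technical point is that the iteration in Step~2 does not keep the iterates on $\mathbb{S}^2$, which is why the constraint must be recovered afterward through the equation for $\rho$.
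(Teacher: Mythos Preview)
The paper does not supply its own proof of this proposition; it attributes the result to Eells--Sampson and refers the reader to Chapter~5 of Lin--Wang \cite{Lin-Wang-2008}. Your proposal is a correct, self-contained argument along the standard lines that reference follows: $\dot H^k$ energy estimates closed via tame product bounds (the undifferentiated factor $\dr_0$ entering only through $|\dr_0|_{L^\infty}$, while $\nabla\dr_0\in H^{k-1}$ enjoys the algebra property), a Picard--Duhamel construction, and the a~posteriori recovery of the sphere constraint from the scalar equation for $\rho=|\dr_0|^2-1$.

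One small point worth making explicit: since $\dr_0\notin L^2$, the function $\rho$ is a~priori only in $L^\infty\cap\dot H^1$, not in $L^2$, so the conclusion $\rho\equiv0$ should be drawn from the maximum principle for bounded solutions (e.g.\ after the substitution $\tilde\rho=e^{-Mt}\rho$ with $M\ge\|2|\nabla\dr_0|^2\|_{L^\infty_{t,x}}$) rather than from an $L^2$ Gronwall argument. This is routine and does not affect the validity of your scheme.
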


The proof can be found in Chapter 5 in the book \cite{Lin-Wang-2008}.

Now we state the main result of this paper:

\begin{theorem}\label{Main-Thm}
We consider vector fields $\dr^{in}:\mathbb{R}^3\to \mathbb{S}^2$ and $\tilde{\dr}^{in}:\mathbb{R}^3\to \mathbb{R}^3$, satifying the compatibility condition $\dr^{in}\cdot\tilde{\dr}^{in}\equiv 0$. Assume that $\nabla \dr^{in} \in H^6$, $\tilde{\dr}^{in} \in H^5$, and let $T> 0$ be the time interval of existence of the solution of the heat flow \eqref{HF} with initial condition $\dr^{in}$, determined in Proposition \ref{Prop-HF}.

Then, there exists an $\eps_0 \equiv \eps_0 (|\nabla \dr^{in} |_{H^6}, |\tilde{\dr}^{in}|_{H^5}, T) \in (0,\frac 12)$ such that for all $\eps \in (0, \eps_0)$ we have that on the interval $[0,T]$ the wave map equation  \eqref{Wave-Map} with the initial conditions \eqref{IC} admits a unique solution with the form \eqref{Ansatz-3}, i.e.,
  \begin{equation*}
    \dr^\eps(t,x) = \dr_0 (t,x) + \dr_0^I \big{(} \tfrac{t}{\eps} , x \big{)} + \sqrt{\eps} \dr_R^\eps(t,x) \, ,
  \end{equation*}
  where  $\dr_0$ is the solution of the heat flow \eqref{HF} with initial condition $\dr^{in}$ and $\dr_0^I ( \frac{t}{\eps}, x)$ is the initial layer \eqref{IL-Construction}. Moreover, there exists a positive constant $C_0 = C_0 (\dr^{in}, \tilde{\dr}^{in}, T) > 0$, such that the remainder term $\dr_R^\eps$ satisfies the bound
  \begin{equation}\label{Unif-Bounds-Remd}
     |\partial_t \dr_R^\eps|^2_{L^\infty(0,T; H^2) } + \tfrac{1}{\eps} |\dr_R^\eps|^2_{L^\infty( 0, T; H^3) }  \leq C_0
  \end{equation}
for all $\eps \in (0,  \eps_0)$.
\end{theorem}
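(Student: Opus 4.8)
The plan is the standard one for singular hyperbolic-to-parabolic limits: treat \eqref{Remainder-Eq}--\eqref{IC-Remainder} as a semilinear damped wave equation for $\dr_R^\eps$, for which, \emph{at each fixed $\eps>0$}, local-in-time existence and uniqueness of a solution with $\dr_R^\eps\in C([0,T_\eps];H^3)$, $\p_t\dr_R^\eps\in C([0,T_\eps];H^2)$ is routine (Picard iteration or Galerkin, using that $\dr_0$ and $\D$ are smooth on $[0,T]$ by Proposition~\ref{Prop-HF}); the content of the theorem is a uniform-in-$\eps$ a priori bound on $[0,T]$ together with a continuation argument. One first records the reduction: since $\dr_0^I(0,x)=-\eps\D(x)$, $\p_t\dr_0^I(0,x)=\D(x)$ and $\dr_R^\eps(0)=\sqrt\eps\D$, $\p_t\dr_R^\eps(0)=0$, the ansatz \eqref{Ansatz-3} with $\dr_R^\eps$ solving \eqref{Remainder-Eq}--\eqref{IC-Remainder} automatically satisfies \eqref{Wave-Map}--\eqref{IC}; conversely any solution of the stated form yields such a $\dr_R^\eps$, so existence/uniqueness for \eqref{Wave-Map} in the class \eqref{Ansatz-3} is equivalent to existence/uniqueness for the remainder equation. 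The working energy is
\[ \mathcal{E}_\eps(t):=\sum_{k=0}^{2}\Big(\tfrac12|\p_t\nabla^k\dr_R^\eps(t)|_{L^2}^2+\tfrac1{2\eps}|\nabla^{k+1}\dr_R^\eps(t)|_{L^2}^2\Big)+\tfrac1{2\eps}|\dr_R^\eps(t)|_{L^2}^2\,, \]
so that \eqref{Unif-Bounds-Remd} reads $\sup_{[0,T]}\mathcal{E}_\eps\lesssim C_0$; note $\mathcal{E}_\eps(0)=\tfrac12|\D|_{H^3}^2$, which is finite and $\eps$-independent under the hypotheses $\nabla\dr^{in}\in H^6$, $\tilde\dr^{in}\in H^5$ (these guarantee $\D\in H^6$, indeed $\nabla\D\in H^5$).

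\textbf{Energy identities.} I would apply $\nabla^k$, $k=0,1,2$, to \eqref{Remainder-Eq} and test with $\p_t\nabla^k\dr_R^\eps$, obtaining
\[ \tfrac{\d}{\d t}\Big(\tfrac12|\p_t\nabla^k\dr_R^\eps|_{L^2}^2+\tfrac1{2\eps}|\nabla^{k+1}\dr_R^\eps|_{L^2}^2\Big)+\tfrac1\eps|\p_t\nabla^k\dr_R^\eps|_{L^2}^2=\skp{\nabla^k(\S(\dr_R^\eps)+\mathcal{R}(\dr_R^\eps))}{\p_t\nabla^k\dr_R^\eps}\,. \]
Because there is no Poincar\'e inequality on $\R^3$, the $L^2$-norm of $\dr_R^\eps$ itself must be recovered separately: testing \eqref{Remainder-Eq} with $\dr_R^\eps$ gives a transport-type identity $\tfrac{\d}{\d t}\big(\skp{\p_t\dr_R^\eps}{\dr_R^\eps}+\tfrac1{2\eps}|\dr_R^\eps|_{L^2}^2\big)+\tfrac1\eps|\nabla\dr_R^\eps|_{L^2}^2=|\p_t\dr_R^\eps|_{L^2}^2+\skp{\S+\mathcal{R}}{\dr_R^\eps}$, a small multiple of which is added to the sum over $k$ so that the indefinite cross term $\skp{\p_t\dr_R^\eps}{\dr_R^\eps}$ (controlled by $\tfrac12|\p_t\dr_R^\eps|_{L^2}^2+\tfrac\eps2\cdot\tfrac1\eps|\dr_R^\eps|_{L^2}^2$, using $\eps<\tfrac12$) and the $|\p_t\dr_R^\eps|_{L^2}^2$ on its right-hand side are absorbed. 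The parabolic-type dissipation $\tfrac1\eps|\p_t\nabla^k\dr_R^\eps|_{L^2}^2$ is the crucial resource throughout.

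\textbf{Estimating the right-hand side.} The terms of $\S$ and $\mathcal{R}$ split into four groups. (i) The genuinely $O(\tfrac1\eps)$ forcings have already been removed in deriving \eqref{Remainder-Eq}: $\p_{tt}\dr_0^I+\tfrac1\eps\p_t\dr_0^I=0$ by the definition \eqref{IL-Construction} of the initial layer, and $\tfrac1\eps(\p_t\dr_0-\Delta\dr_0-|\nabla\dr_0|^2\dr_0)=0$ by the heat flow \eqref{HF}; this is precisely why $\S$ contains nothing worse than $O(\tfrac1{\sqrt\eps})$. (ii) The residual $O(\tfrac1{\sqrt\eps})$ \emph{forcing} terms ($-\tfrac1{\sqrt\eps}\p_{tt}\dr_0$, $-\tfrac1{\sqrt\eps}\Delta\D\,\mathrm{e}^{-t/\eps}$, $-\tfrac1{\sqrt\eps}|\p_t\dr_0+\D\mathrm{e}^{-t/\eps}|^2\dr_0$, $-\tfrac1{\sqrt\eps}|\nabla\dr_0|^2\D\mathrm{e}^{-t/\eps}$, $-\tfrac2{\sqrt\eps}(\nabla\dr_0\cdot\nabla\D)\mathrm{e}^{-t/\eps}\dr_0$) do not decay in general, but tested against $\p_t\nabla^k\dr_R^\eps$ and integrated in $t$ they are controlled via $\tfrac1{\sqrt\eps}ab\le\tfrac1{4\eps}a^2+b^2$: the $\tfrac1{4\eps}|\p_t\nabla^k\dr_R^\eps|_{L^2}^2$ part is absorbed by the dissipation, and the $b^2$ part is bounded by $\int_0^T\big(|\nabla^k\p_{tt}\dr_0|_{L^2}^2+\cdots\big)\d s<\infty$, using Proposition~\ref{Prop-HF} at a sufficiently high Sobolev index (which consumes the regularity of $\dr^{in}$) and the smoothness of $\D$; the exponentially small terms even gain a factor $O(\sqrt\eps)$ since $\tfrac1\eps\int_0^T\mathrm{e}^{-2s/\eps}\d s\le\tfrac12$. (iii) The $O(\tfrac1\eps)$ terms \emph{linear} in $\dr_R^\eps$, namely $\tfrac1\eps|\nabla\dr_0|^2\dr_R^\eps$ and $\tfrac2\eps(\nabla\dr_0\cdot\nabla\dr_R^\eps)\dr_0$: Young with weight $\tfrac1\eps$ splits them into $\tfrac1{2\eps}|\p_t\nabla^k\dr_R^\eps|_{L^2}^2$ (absorbed) plus $\tfrac C\eps|\nabla^{\le k+1}\dr_R^\eps|_{L^2}^2\le C\mathcal{E}_\eps$, which after integration yields an $\eps$-\emph{independent} Gronwall loop; commutators $[\nabla^k,|\nabla\dr_0|^2]$ and the like are treated by Moser/Kato--Ponce product estimates in $H^k$, with the heat-flow norms of $\dr_0$ as coefficients. (iv) Every remaining term of $\S$ and $\mathcal{R}$ carries an explicit power of $\sqrt\eps$ or is at least quadratic in $\dr_R^\eps$ (it arises from $\eps|\p_t\dr^\eps|^2$ or $\tfrac1\eps|\nabla\dr^\eps|^2$ evaluated on $\w:=-\eps\D\mathrm{e}^{-t/\eps}+\sqrt\eps\dr_R^\eps$); here one invokes the bootstrap assumption together with $H^2(\R^3)\hookrightarrow L^\infty$, whence $|\dr_R^\eps|_{L^\infty}\lesssim|\dr_R^\eps|_{H^2}\lesssim\sqrt\eps\sqrt{\mathcal{E}_\eps}$, so each such term contributes $\sqrt\eps$ (or $\eps$) times a polynomial in $\mathcal{E}_\eps$ and the heat-flow norms. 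Collecting the four groups produces a differential inequality $\tfrac{\d}{\d t}\mathcal{E}_\eps\le C_1\mathcal{E}_\eps+C_2+\sqrt\eps\,P(\mathcal{E}_\eps)$ with $C_1,C_2,P$ depending only on the heat-flow norms on $[0,T]$; choosing $C_0$ slightly larger than $|\D|_{H^3}^2\mathrm{e}^{C_1T}+C_2T\mathrm{e}^{C_1T}$ and $\eps_0$ small, a continuation argument (on the maximal interval where $\mathcal{E}_\eps\le C_0$ one shows $\mathcal{E}_\eps\le\tfrac12C_0$, so the local solution extends up to $T$) closes the bootstrap and gives \eqref{Unif-Bounds-Remd}.

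\textbf{Main obstacle.} The delicate point — as opposed to the (admittedly lengthy) bookkeeping of the ten-plus terms of $\S$ and the fifteen-plus terms of $\mathcal{R}$ at the three derivative levels — is that both the residual $O(\tfrac1{\sqrt\eps})$ forcings of group (ii) and the $O(\tfrac1\eps)$ linear terms of group (iii) must be absorbed into the \emph{single} dissipative quantity $\tfrac1\eps|\p_t\nabla^k\dr_R^\eps|_{L^2}^2$; one must keep track that the accumulated weights stay below its full strength, and, crucially, that the $\tfrac1\eps$ in front of the linear-in-$\dr_R^\eps$ terms is matched by the $\tfrac1\eps$ already carried by $\mathcal{E}_\eps$ in front of $|\dr_R^\eps|^2$ and $|\nabla\dr_R^\eps|^2$, so that the Gronwall constant is $\eps$-independent and no factor $\mathrm{e}^{Ct/\eps}$ appears. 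A secondary technical matter is checking that Proposition~\ref{Prop-HF}, applied at the right index, really supplies all the norms of $\dr_0$ needed for the consistency error — in particular $\p_{tt}\dr_0=\p_t(\Delta\dr_0+|\nabla\dr_0|^2\dr_0)\in L^2(0,T;H^2)$, which forces $\dr_0\in L^\infty(0,T;\dot H^k)$ for $k$ around $7$ — and this is exactly what dictates the hypotheses $\nabla\dr^{in}\in H^6$, $\tilde\dr^{in}\in H^5$. Uniqueness on $[0,T]$ follows from the same $L^2$-type energy estimate applied to the difference of two solutions of the form \eqref{Ansatz-3} sharing the data \eqref{IC}.
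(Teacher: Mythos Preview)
Your proposal is correct and follows essentially the same route as the paper. Two packaging differences are worth noting. First, the paper combines the testing-with-$\p_t\dr_R^\eps$ and testing-with-$\dr_R^\eps$ identities into a single energy $E_\eps(t)=|\p_t\dr_R^\eps|_{H^2}^2+(\tfrac{1}{\eps}-1)|\dr_R^\eps|_{H^2}^2+\tfrac{2}{\eps}|\nabla\dr_R^\eps|_{H^2}^2+|\p_t\dr_R^\eps+\dr_R^\eps|_{H^2}^2$, the cross term $|\p_t\dr_R^\eps+\dr_R^\eps|^2$ playing exactly the role of your $\langle\p_t\dr_R^\eps,\dr_R^\eps\rangle$. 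Second, rather than a bootstrap, the paper writes the resulting differential inequality in the closed Bernoulli form $\tfrac{\d}{\d t}E_\eps\leq C(1+E_\eps)(1+\eps E_\eps)$, integrates it explicitly to get a guaranteed existence time $T^\eps=\tfrac{1}{C}\ln\!\big(\tfrac{1+\eps M}{\eps(1+M)}\big)$ with $M=E_\eps(0)$, and then simply observes that $T^\eps>T$ once $\eps$ is small; this and your continuation argument are interchangeable. Your identification of the main obstacle---keeping the Gronwall constant $\eps$-independent when the $O(\tfrac{1}{\eps})$ linear terms and the $O(\tfrac{1}{\sqrt\eps})$ forcings compete for the same dissipation---is exactly the point.
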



\begin{remark}
The rate of convergence we obtain is optimal. Indeed, in order to see this, it suffices to note that the limit we study contains as a particular case the linear {\it scalar} case of the singular limit of the damped wave equation to the heat equation.

Indeed, let us consider a solution of the scalar damped wave  equation:

\be
\epsilon\partial_{tt} \theta^\eps+\partial_t \theta^\eps=\Delta \theta^\eps
\ee for $\theta^\eps:\R\times\R^3\to\R$ with initial datas:

\be
\theta^\eps(0,x)=\theta_0(x),\partial_t \theta^\eps(0,x)=\theta_1(x)
\ee all smooth functions.

Also consider the solution of the heat equation:

\be
\partial_t \theta^0=\Delta \theta^0
\ee for $\theta^0:\R\times\R^3\to\R$ with initial data:

\be
\theta^0(0,x)=\theta_0(x),
\ee all smooth functions.

Denoting $n^0(t,x):=(\cos\theta^0 (t,x),\sin\theta^0 (t,x),0)$ and $n^\eps(t,x):=(\cos\theta^\eps (t,x),\sin\theta^\eps(t,x),0)$ we have that $n^0$ is a solution of the heat-flow:

\be
\partial_t n^0=\Delta n^0+n^0|\nabla n^0|^2
\ee with initial data

\be
n^0(0,x)=n_0(x):=(\cos\theta_0,\sin\theta_0,0)
\ee while $n^\eps$ is a solution of the wave-map flow:

\be
\eps(\partial_{tt}n^\eps+n^\eps|\partial_t n^\eps|^2)+\partial_t n^\eps=\Delta n^\eps+n^\eps|\nabla n^\eps|^2
\ee with initial datas:

\bea
n^\eps(0,x)&=n_0(x):=(\cos\theta_0(x),\sin\theta_0(x),0), \\
\partial_t n^\eps(0,x)&=n_1(x)=(-\sin\theta_0(x),\cos\theta_0(x),0)\theta_1(x)
\eea

Taking $\theta_1=\Delta \theta_0$ the claimed optimality of the rate of converge is shown in \cite{chill-haraux}.
\end{remark}

\bigskip
A rigorous justification of the formal expansion \eqref{Ansatz-3} in the context of classical solutions is provided in this paper. For the original wave map \eqref{Wave-Map} with a damping  the energy bounds of $\dr^\eps$ uniform in small $\eps > 0$ do not seem  available. By taking the expansion \eqref{Ansatz-3} of the solutions $\dr^\eps$ to the system \eqref{Wave-Map} with the initial conditions \eqref{IC}, one can yield a remainder system \eqref{Remainder-Eq}-\eqref{IC-Remainder} of $\dr_R^\eps$. Although the remainder system \eqref{Remainder-Eq} with the initial data conditions \eqref{IC-Remainder} is still nonlinear and singular (with singular terms of the type $\frac{1}{\eps} \cdot$), it has weaker nonlinearities than  the original system \eqref{Wave-Map}. More precisely, after using the expansion \eqref{Ansatz-3}, the nonlinear term $(- \eps |\partial_t \dr^\eps|^2 + |\nabla \dr^\eps|^2) \dr^\eps$ is replaced by either linear terms (the unknown $\dr^\eps$ is superseded by the known $\dr_0$) or a nonlinear term with the same form but with some higher order powers of $\eps$ in front. So, by standard energy estimates, we can get uniform bounds in small $\eps > 0$ of the remainder system \eqref{Remainder-Eq}-\eqref{IC-Remainder}.

\bigskip
The organization of this paper is as follows: in next section, we give the formal analysis for the asymptotic behavior of the wave map \eqref{Wave-Map} with a damping and  initial conditions \eqref{IC} as the inertia density $\eps \rightarrow 0$ by constructing the initial layer $\dr_0^I (\frac{t}{\eps}, x)$ to deal with the compatibility of the original initial conditions \eqref{IC} and the initial condition of the limit system \eqref{HF}. In Section \ref{Sct-Unif-Bounds}, we estimate the uniform energy bounds on small $\eps > 0$ of the remainder system \eqref{Remainder-Eq}-\eqref{IC-Remainder}. Finally, based on the uniform energy estimates in the previous section, Theorem \ref{Main-Thm} of this paper is proved in Section \ref{Sct-Proof}.

\section{Formal Analysis}\label{Sct-Formal-Anly}

In this section we present the formal analysis of the limit  $\eps\to 0$ for  the damped wave map \eqref{Wave-Map} with the initial conditions \eqref{IC}. Out of the equation \eqref{Wave-Map} we note that the formal limit, obtaining by setting $\eps=0$, is the heat flow system \eqref{HF} for functions taking values into $\mathbb{S}^2$. We can then naturally  take the  ansatz
\begin{equation}\label{Ansatz-1}
  \dr^\eps (t,x) = \dr_0 (t,x) + \hat\dr_R^\eps(t,x) \, ,
\end{equation}
where $\dr_0 (t,x)$ is a solution of  the heat flow system \eqref{HF} and $\hat\dr_R^\eps (t,x)$ satisfies a hyperbolic system, formally similar to  \eqref{Wave-Map} but without the geometric constraint of taking values into $\mathbb{S}^2$.

 If the ansatz \eqref{Ansatz-1} were reasonable then $\hat\dr_R^\eps (t,x) = O(\eps^\alpha)$ in some sense for some $\alpha > 0$ as $\eps > 0$ is small enough. However, by the second initial condition in \eqref{IC} and the heat flow system \eqref{HF}, we know that
\begin{equation*}
  \begin{aligned}
    \partial_t \hat\dr_R^\eps (0,x) =& \partial_t \dr^\eps (0,x) - \partial_t \dr_0 (0,x) \\
    =& \tilde{\dr}^{in}(x) - \partial_t \dr_0 (0,x) \, ,
  \end{aligned}
\end{equation*}
which will not go to 0 as $\eps \rightarrow 0$ for arbitrarily given vectors $\tilde{\dr}^{in}(x)$ and $\dr^{in}(x)$. As a consequence, $\hat\dr_R^\eps (t,x) \neq O(\eps^\alpha)$ uniformly in time for any $\alpha > 0$, and then the ansatz \eqref{Ansatz-1} is not satisfactory.

Therefore, in order to compensate the effect of the initial data, we need to introduce a correction term $\dr_0^I \big{(} \frac{t}{\eps^\beta} , x \big{)}$ for some $\beta > 0$ to be determined, called {\em initial layer}, such that the second initial condition in \eqref{IC} is satisfied by $\dr_0(t,x) + \dr_0^I \big{(} \frac{t}{\eps^\beta} , x \big{)} $ without disturbing too much the first initial condition in \eqref{IC}, namely
$$ \dr_0^I \big{(} \tfrac{0}{\eps^\beta} , x \big{)} = O(\eps^\alpha) $$
for some $\alpha > 0$ as $\eps \rightarrow 0$. Thus we take the alternative ansatz

\begin{equation}\label{Ansatz-2}
  \dr^\eps(t,x) = \dr_0 (t,x) + \dr_0^I \big{(} \tfrac{t}{\eps^\beta} , x \big{)} + \sqrt{\eps}\dr_R^\eps(t,x) \, ,
\end{equation} where the power $\sqrt{\eps}$ in front of the remainder term is motivated by the scaling we chose. This measures the rate of convergence, as it will be shown in the proof of Theorem~\ref{Main-Thm}.

Recalling that $\dr_0$ is a solution of the heat flow \eqref{HF}, we plug \eqref{Ansatz-2} into the system \eqref{Wave-Map} and obtain:

\begin{equation*}
  \begin{aligned}
    & \eps \partial_{tt} (\dr_0 +\sqrt{\eps} \dr_R^\eps)  + \sqrt{\eps}\partial_t \dr_R^\eps + \big{(} \eps \partial_{tt} \dr_0^I + \partial_t \dr_0^I \big{)} - \Delta (\dr_0^I +\sqrt{\eps} \dr_R^\eps) \\
    =&  - \eps |\partial_t (\dr_0 + \dr_0^I +\sqrt{\eps} \dr_R^\eps)|^2 (\dr_0 + \dr_0^I +\sqrt{\eps} \dr_R^\eps) \\
    & + |\nabla (\dr_0 + \dr_0^I + \sqrt{\eps}\dr_R^\eps)|^2  (\dr_0^I +\sqrt{\eps} \dr_R^\eps)  + \big{[} 2 \nabla \dr_0 \cdot \nabla ( \dr_0^I +\sqrt{\eps} \dr_R^\eps ) + |\nabla ( \dr_0^I +\sqrt{\eps} \dr_R^\eps)|^2 \big{]} \dr_0 \, .
  \end{aligned}
\end{equation*}

Then we construct the initial layer in order to cancel certain  time-dependent terms in the previous equations and to accommodate the discrepancy in the initial data , namely we take $\dr_0^I$ satisfying the $x$-dependent ODE and the initial-data condition:
\begin{equation*}
  \left\{
    \begin{array}{l}
      \eps \partial_{tt} \dr_0^I \big{(} \tfrac{t}{\eps^\beta} , x \big{)} + \partial_t \dr_0^I \big{(} \tfrac{t}{\eps^\beta} , x \big{)} = 0 \, , \\
      \partial_t \dr_0^I \big{(} \tfrac{0}{\eps^\beta},x \big{)} = \tilde{\dr}^{in} (x) - \partial_t \dr_0(0,x) \, .
    \end{array}
  \right.
\end{equation*}
Furthermore, since $\dr_0^I$ is an initial layer, the following condition at infinity is required:
\begin{equation}\label{IL-BC}
  \dr_0^I (\infty, x) = \lim\limits_{y\rightarrow \infty} \dr_0^I (y, x) = 0 \, .
\end{equation}
By solving the ODE system with the given boundary conditions we have
\begin{equation}\label{IL}
  \dr_0^I \big{(} \tfrac{t}{\eps^\beta} , x \big{)} = - \eps \big{(} \tilde{\dr}^{in} (x) - \partial_t \dr_0 (0,x) \big{)} \exp \big{(} - \tfrac{t}{\eps} \big{)} \, .
\end{equation}

We remark that the initial layer $\dr_0^I \big{(} \frac{t}{\eps^\beta} , x \big{)}$ in \eqref{IL} is, in fact, independent of $\beta > 0$ and $\dr_0^I \big{(} \frac{0}{\eps^\beta} , x \big{)} = - \eps \big{(} \tilde{\dr}^{in} (x) - \partial_t \dr_0 (0,x) \big{)} \rightarrow 0 $ for any given $\tilde{\dr}^{in}(x)$ and $\dr^{in}(x)$ as $\eps \rightarrow 0$. Consequently, the ansatz \eqref{Ansatz-2} is reasonable.

Without loss of generality, we take $\beta = 1$ in the ansatz \eqref{Ansatz-2}. Thus, by substituting \eqref{HF} and \eqref{Ansatz-3} into the system \eqref{Wave-Map}, we derive the  equation satisfied by the remainder $\dr_R^\eps(t,x)$ as follows:
\begin{equation*}
  \begin{aligned}
    & \eps^\frac{3}{2} \partial_{tt} \dr_R^\eps + \eps \partial_{tt} \dr_0 + \sqrt{\eps} \partial_t \dr_R^\eps - \sqrt{\eps} \Delta \dr_R^\eps + \eps \Delta \D(x) \exp \big{(} - \tfrac{t}{\eps} \big{)} \\
    =& - \eps \Big{|} \partial_t \big{(} \dr_0 - \eps \D(x) \exp \big{(} - \tfrac{t}{\eps} \big{)} + \sqrt{\eps} \dr_R^\eps \big{)} \Big{|}^2  \big{(} \dr_0 - \eps \D(x) \exp \big{(} - \tfrac{t}{\eps} \big{)} + \sqrt{\eps} \dr_R^\eps \big{)} \\
    & + \Big{|} \nabla \big{(} \dr_0 - \eps \D(x) \exp \big{(} - \tfrac{t}{\eps} \big{)} + \sqrt{\eps} \dr_R^\eps \big{)} \Big{|}^2  \big{(}- \eps \D(x) \exp \big{(} - \tfrac{t}{\eps} \big{)} + \sqrt{\eps} \dr_R^\eps \big{)} \\
    & + \Big{[} 2 \nabla \dr_0 \cdot \nabla \big{(} - \eps \D(x) \exp ( - \tfrac{t}{\eps} ) + \sqrt{\eps} \dr_R^\eps \big{)} + \big{|} \nabla ( - \eps \D(x) \exp ( - \tfrac{t}{\eps} ) + \sqrt{\eps} \dr_R^\eps  ) \big{|}^2 \Big{]} \dr_0 \, ,
  \end{aligned}
\end{equation*}
which, after multiplication by $\eps^{-\frac{3}{2}}$ is the equation \eqref{Remainder-Eq} we used before.

\section{Uniform Energy Estimates}\label{Sct-Unif-Bounds}

In this section, we will provide, by energy methods,  bounds that are uniform with respect to  small inertia constant $\eps > 0$, for the remainder system \eqref{Remainder-Eq}-\eqref{IC-Remainder}. By Proposition \ref{Prop-HF}, the $\dr_0$, which obeys the heat flow \eqref{HF} into the unit sphere $\mathbb{S}^2$  is regarded as a known quantity in the remainder system \eqref{Remainder-Eq}-\eqref{IC-Remainder}.

To conveniently state our results, we need to introduce the following energy functionals:
\begin{equation*}
  \begin{aligned}
    E_{\eps}(t) &= | \p_t \dre |^2_{H^2} + \big{(} \tfrac{1}{\eps} - 1 \big{)} | \dre |^2_{H^2} + \tfrac{2}{\eps}| \nabla \dre |^2_{H^2} + | \p_t \dre +  \dre |^2_{H^2}\, , \\
    F_{\eps}(t) &= \big{(} \tfrac{1}{\eps} - \tfrac{1}{2} \big{)} | \p_t \dre |^2_{H^2} + \tfrac{1}{2\eps} | \nabla \dre |^2_{H^2}\, .
  \end{aligned}
\end{equation*}
The following lemma provides the claimed uniform energy estimates :
\begin{lemma}\label{Lm-Unif-Bounds}
  Let $\dr^{in} \in\dot{H}^7(\mathbb{R}^3;\mathbb{S}^2)$ and $[0,T]$ be the interval of existence of the the solution of the heat flow with initial data $\dr^{in}$, as provided in Proposition \ref{Prop-HF}.

 For $\tilde \dr^{in}\in H^5$ assume that  there exists a $\eps_0 \equiv \eps_0 (|\nabla \dr^{in} |_{H^6}, |\tilde{\dr}^{in}|_{H^5}, T) \in (0,\frac 12)$ such that for all $\eps \in (0, \eps_0)$  we that  $\dr_R^\eps\in L^\infty ([0,T); {H}^3)$ and $\partial_t \dr^\eps \in L^\infty ([0,T); H^2)$ is a  solution to the remainder system \eqref{Remainder-Eq}-\eqref{IC-Remainder}. Then there exists a positive constant $C = C ( |\nabla \dr^{in}|_{H^6}, |\tilde{\dr}^{in}|_{H^5}, T ) > 0$ such that the inequality
  \begin{equation}\label{Energy-Bounds}
    \frac{\d}{\d t} E_\eps (t) + 3 F_\eps (t) \leq C \big{[} 1 + E_\eps (t) \big{]} \big{[} 1 + \eps E_\eps (t) \big{]}
  \end{equation}
  holds for all $\eps \in (0, \frac{1}{2})$ and $t\in [0,T)$.
\end{lemma}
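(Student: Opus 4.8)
The plan is to derive the differential inequality \eqref{Energy-Bounds} by performing $H^2$-energy estimates on the remainder equation \eqref{Remainder-Eq}, carefully tracking the powers of $\eps$. First I would apply the derivative operator $\nabla^\gamma$ for $|\gamma| \le 2$ to \eqref{Remainder-Eq}, then test against $\partial_t \nabla^\gamma \dr_R^\eps$ and against $\nabla^\gamma \dr_R^\eps$, and sum over $\gamma$. The combination $\skp{\partial_{tt}\dr_R^\eps + \tfrac1\eps \partial_t\dr_R^\eps - \tfrac1\eps\Delta\dr_R^\eps}{\partial_t\dr_R^\eps}$ produces $\tfrac12\tfrac{\d}{\d t}\big(|\partial_t\dr_R^\eps|^2_{H^2} + \tfrac1\eps|\nabla\dr_R^\eps|^2_{H^2}\big) + \tfrac1\eps|\partial_t\dr_R^\eps|^2_{H^2}$; the extra test against $\dr_R^\eps$ (with a suitable weight) supplies control on $\tfrac1\eps|\dr_R^\eps|^2_{H^2}$ via $\skp{-\tfrac1\eps\Delta\dr_R^\eps}{\dr_R^\eps} = \tfrac1\eps|\nabla\dr_R^\eps|^2_{H^2}$, together with the cross term $\tfrac{\d}{\d t}\skp{\partial_t\dr_R^\eps}{\dr_R^\eps}$. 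Forming the precise linear combination that reproduces exactly $E_\eps(t)$ on the left (the ``$-1$'' shifts in $E_\eps$ and the $|\partial_t\dr_R^\eps + \dr_R^\eps|^2_{H^2}$ term are engineered precisely so that this combination closes, absorbing the indefinite cross terms into a perfect square) and $3F_\eps(t)$ as the dissipation; this is a standard but bookkeeping-heavy computation with the damped-wave energy functional.

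Next I would estimate the right-hand side, i.e.\ the $H^2$ inner products of $\mathcal{S}(\dr_R^\eps) + \mathcal{R}(\dr_R^\eps)$ against $\partial_t\dr_R^\eps$ and $\dr_R^\eps$. The key structural point, already flagged in the excerpt, is that every genuinely singular term in $\mathcal{S}$ is at worst linear in $\dr_R^\eps$ (or in $\nabla\dr_R^\eps$): terms like $\tfrac1\eps|\nabla\dr_0|^2\dr_R^\eps$ and $\tfrac2\eps(\nabla\dr_0\cdot\nabla\dr_R^\eps)\dr_0$ pair against $\tfrac1\eps$-weighted quantities in $E_\eps$ and are controlled using the uniform bounds on $\dr_0$ from Proposition \ref{Prop-HF} (in $\dot H^k$ with $k$ large enough, $k = 7$, so that products of $\dr_0$ with up-to-second derivatives are bounded in $L^\infty_t H^2$), plus Sobolev embedding $H^2(\R^3)\hookrightarrow L^\infty$ and the algebra property of $H^2$. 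The terms $-\tfrac1{\sqrt\eps}\partial_{tt}\dr_0$ and $-\tfrac1{\sqrt\eps}\Delta\D(x)e^{-t/\eps}$ are the ``source'' terms: pairing $-\tfrac1{\sqrt\eps}\partial_{tt}\dr_0$ against $\partial_t\dr_R^\eps$ gives $\lesssim \tfrac1{\sqrt\eps}|\partial_{tt}\dr_0|_{H^2}|\partial_t\dr_R^\eps|_{H^2} \lesssim |\partial_{tt}\dr_0|_{H^2}^2 + E_\eps(t)$ after Young's inequality (note $\tfrac1{\sqrt\eps}|\partial_t\dr_R^\eps|_{H^2} = (\tfrac1\eps)^{1/2}|\partial_t\dr_R^\eps|_{H^2} \le E_\eps^{1/2}$ up to the lower-order correction), and $\partial_{tt}\dr_0 = \Delta\partial_t\dr_0 + \partial_t(|\nabla\dr_0|^2\dr_0)$ lies in $L^2_t H^2$ by Proposition \ref{Prop-HF}; the exponential $e^{-t/\eps}$ factors are uniformly bounded by $1$ and moreover $\int_0^T e^{-2t/\eps}\,\d t \lesssim \eps$, which is what keeps the $\tfrac1\eps$ in front of $\Delta\D(x)e^{-t/\eps}$ integrable in time after a Young split — I would absorb $\tfrac1{\sqrt\eps}|\Delta\D|_{H^2}e^{-t/\eps}\cdot\tfrac1{\sqrt\eps}|\partial_t\dr_R^\eps|_{H^2}$ as $\tfrac1\eps|\Delta\D|_{H^2}^2 e^{-2t/\eps} + E_\eps(t)$, the first piece being integrable with an $O(1)$ bound. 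The quadratic and cubic terms in $\mathcal{R}$ (such as $|\nabla\dr_R^\eps|^2\dr_R^\eps$) carry a compensating power of $\sqrt\eps$ or $\eps$ in front once one accounts for the $\sqrt\eps$ in the ansatz, so after the $H^2$ algebra estimate they contribute $\lesssim \eps E_\eps(t)(1+E_\eps(t))$ or similar, which is exactly the shape of the right-hand side of \eqref{Energy-Bounds}; the mixed terms with $\D(x)e^{-t/\eps}$ and one factor of $\dr_R^\eps$ are handled the same way, using $|\D|_{H^2} \lesssim |\nabla\dr^{in}|_{H^6} + |\tilde\dr^{in}|_{H^5}$ via the definition of $\D$ and the heat-flow bounds.

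Assembling these, the dangerous $\tfrac1\eps$ and $\tfrac1{\sqrt\eps}$ factors are all matched either against the dissipation $F_\eps$ (absorbed on the left, using $2F_\eps \ge$ the relevant $\tfrac1\eps$-weighted norms, leaving $3F_\eps$ with room to spare) or against $E_\eps$ itself, while the remaining contributions form the product $C[1+E_\eps][1+\eps E_\eps]$; integrating the source norms like $|\partial_{tt}\dr_0|^2_{L^2_tH^2}$ into the constant $C = C(|\nabla\dr^{in}|_{H^6},|\tilde\dr^{in}|_{H^5},T)$ via Proposition \ref{Prop-HF} gives \eqref{Energy-Bounds}. I expect the main obstacle to be the precise algebraic verification that the chosen linear combination of the two energy identities reproduces \emph{exactly} $\tfrac{\d}{\d t}E_\eps + 3F_\eps$ on the left with a manifestly nonnegative dissipation — in particular correctly handling the indefinite cross term $\tfrac{\d}{\d t}\skp{\partial_t\dr_R^\eps}{\dr_R^\eps}$ and the sign of the lower-order $-1$ and $-\tfrac12$ shifts (which require $\eps < \tfrac12$ to keep $\tfrac1\eps - 1 > 0$ and $\tfrac1\eps - \tfrac12 > 0$), and ensuring that the $\tfrac1{\sqrt\eps}|\nabla\dr_R^\eps|^2\dr_0$ term, which is singular but quadratic, is genuinely controlled: here one writes it as $\tfrac1{\sqrt\eps}|\nabla\dr_R^\eps|^2\dr_0$ paired against $\partial_t\dr_R^\eps$, bounds it by $\tfrac1{\sqrt\eps}|\nabla\dr_R^\eps|^2_{H^2}|\dr_0|_{L^\infty_tH^2\text{-alg}}|\partial_t\dr_R^\eps|_{H^2} \lesssim |\nabla\dr_R^\eps|^2_{H^2}\cdot(\tfrac1\eps|\nabla\dr_R^\eps|^2_{H^2})^{1/2}\cdot(\eps)^{1/2}\cdot(\tfrac1\eps|\partial_t\dr_R^\eps|^2_{H^2})^{1/2} \lesssim \eps^{1/2}E_\eps^{3/2} \lesssim E_\eps(1+\eps E_\eps)$, which again fits the target form. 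Once this linear-combination identity and the term-by-term matching of singular powers are pinned down, the rest is routine Sobolev bookkeeping.
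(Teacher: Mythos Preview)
Your proposal is correct and follows essentially the same approach as the paper: test the remainder equation (at all derivative levels $0\le k\le 2$) against both $\partial_t\nabla^k\dr_R^\eps$ and $\nabla^k\dr_R^\eps$, combine the two with weight $\tfrac12$ on the second to reproduce $\tfrac14\tfrac{\d}{\d t}E_\eps + F_\eps$ on the left, and estimate $\mathcal{S}$ and $\mathcal{R}$ term by term using $H^2$ product estimates and the $L^\infty_t$ bounds on $\dr_0$ from Proposition~\ref{Prop-HF}. One small slip to flag: in your handling of the source $-\tfrac1{\sqrt\eps}\Delta\D(x)e^{-t/\eps}$ you inserted an extra $\tfrac1{\sqrt\eps}$ and then invoked time-integrability of $e^{-2t/\eps}$, but that would only yield an integrated estimate, not the pointwise differential inequality \eqref{Energy-Bounds}; the paper (and the cleaner route) simply uses $e^{-t/\eps}\le 1$ and writes $\tfrac1{\sqrt\eps}|\Delta\D|_{H^2}\,|\partial_t\dr_R^\eps|_{H^2} = |\Delta\D|_{H^2}\cdot\big|\tfrac{\partial_t\dr_R^\eps}{\sqrt\eps}\big|_{H^2}\lesssim C + \delta F_\eps$, and likewise takes $|\partial_{tt}\dr_0|_{L^\infty_t H^2}$ (not $L^2_t$) into the constant.
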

We remark that the condition $0 < \eps < \frac{1}{2}$  guarantees the relation
$$ \tfrac{1}{2 \eps} < \tfrac{1}{\eps} - 1 < \tfrac{1}{\eps}\, , $$
which makes the energy functionals $E_\eps (t)$ and $F_\eps (t)$  non-negative. Since our goal  is to rigorously analyze the asymptotic behavior as $\eps \rightarrow 0$ for the wave map \eqref{Wave-Map}-\eqref{IC}, the condition $0 < \eps < \frac{1}{2}$ is sufficient.

\begin{proof}
For the convenience of notations, we rewrite the singular terms of the remainder system \eqref{Remainder-Eq} as
\begin{equation}
  \begin{aligned}
  & \left.
      \begin{array}{l}
        \mathcal{S}(\dr^\eps_R) =  -\tfrac{1}{\sqrt{\eps}} \Big{(} \p_{tt} \dr_0 + \Delta \D(x) \exp( -\tfrac{t}{\eps} ) + | \p_t \dr_0 + \D(x) \exp( -\tfrac{t}{\eps} ) |^2 \dr_0 \\
        \qquad\qquad\, + | \nabla \dr_0 |^2 \D(x) \exp( -\tfrac{t}{\eps} ) + 2 \nabla \dr_0 \cdot \nabla \D(x) \exp (-\tfrac{t}{\eps}) \dr_0 \Big{)}
      \end{array}
     \  \right\}\ \mathcal{S}_1 \\
  & \left.
      \begin{array}{l}
      \qquad\qquad \, + \tfrac{1}{\sqrt{\eps}} \big{(} 2 ( \nabla \dr_0 \cdot \nabla \dr^\eps_R ) \dr^\eps_R + | \nabla \dr^\eps_R |^2 \dr_0\big{)}
      \end{array}
     \qquad\qquad\qquad\qquad\quad\quad  \right\}\ \mathcal{S}_2 \\
  & \left.
      \begin{array}{l}
      \qquad\qquad \, + \tfrac{1}{\eps} \big{[} | \nabla \dr_0 |^2 \dr^\eps_R + 2 ( \nabla \dr_0 \cdot \nabla \dr^\eps_R ) \dr_0 \big{]}
      \end{array}
     \qquad\qquad \, \qquad\qquad\qquad\  \,\, \right\}\ \mathcal{S}_3 \\
     & \qquad\quad\, \triangleq \mathcal{S}_1 + \mathcal{S}_2 + \mathcal{S}_3
  \end{aligned}
\end{equation}
and the regular terms as
\begin{equation}
  \begin{aligned}
  & \left.
      \begin{array}{l}
        \mathcal{R}(\dr^\eps_R) = \sqrt{\eps} | \p_t \dr_0 + \D(x) \exp(-\tfrac{t}{\eps}) |^2  \D(x) \exp(-\tfrac{t}{\eps}) + \sqrt{\eps} |\nabla \D(x) |^2 \exp(-\tfrac{2t}{\eps}) \dr_0 \\
        \qquad\qquad\, - \eps^{\frac{3}{2}} | \nabla \D(x) |^2 \exp(-\tfrac{3t}{\eps}) \D(x) + 2\sqrt{\eps} (\nabla \dr_0 \cdot \nabla \D(x) ) \exp(-\tfrac{2t}{\eps}) \D(x)
      \end{array}
     \qquad\quad\, \right\}\ \mathcal{R}_1 \\
  & \left.
      \begin{array}{l}
       \qquad\qquad \,  - 2 \big{(} \p_t \dr_0 + \D(x) \exp(-\tfrac{t}{\eps}) \big{)} \cdot \p_t \dr^\eps_R \dr_0 - | \p_t \dr_0 + \D(x) \exp(-\tfrac{t}{\eps}) |^2 \dr^\eps_R  \\
       \qquad\qquad \, + 2 \eps \big{(} \p_t \dr_0 + \D(x) \exp(-\tfrac{t}{\eps}) \big{)} \cdot \p_t \dr^\eps_R \D(x) \exp(-\tfrac{t}{\eps}) + \eps | \nabla \D |^2 \exp(-\tfrac{2t}{\eps}) \dr^\eps_R \\
       \qquad\qquad \, - 2 ( \nabla \dr_0 \cdot \nabla \D(x) ) \exp(-\tfrac{t}{\eps}) \dr^\eps_R - 2 (\nabla \dr_0 \cdot \nabla \dr^\eps_R ) \D(x) \exp(-\tfrac{t}{\eps}) \\
       \qquad\qquad \, - 2( \nabla \D(x) \cdot \nabla \dr^\eps_R ) \exp(-\tfrac{t}{\eps}) \dr_0 + 2 \eps (\nabla \D(x) \cdot \nabla \dr^\eps_R ) \exp(-\tfrac{2t}{\eps}) \D(x)
      \end{array}
       \qquad \right\}\ \mathcal{R}_2 \\
       & \left.
      \begin{array}{l}
       \qquad\qquad \, + \eps^{\frac{3}{2}} \D(x) \exp(-\tfrac{t}{\eps}) | \p_t \dr^\eps_R |^2 - 2 \sqrt{\eps} \big{(} \p_t \dr_0 + \D(x) \exp(-\tfrac{t}{\eps}) \big{)} \cdot \p_t \dr^\eps_R \dr^\eps_R \\
       \qquad\qquad \, - \sqrt{\eps} | \p_t \dr^\eps_R |^2 \dr_0 - 2\sqrt{\eps} ( \nabla \D(x) \cdot \nabla \dr^\eps_R ) \exp(-\tfrac{t}{\eps}) \dr^\eps_R - \sqrt{\eps} | \nabla \dr^\eps_R |^2 \D(x) \exp(-\tfrac{t}{\eps})
      \end{array}
      \right\}\ \mathcal{R}_3 \\
        & \left.
      \begin{array}{l}
      \qquad\qquad \, -\eps | \p_t \dr^\eps_R |^2 \dr^\eps_R +| \nabla \dr^\eps_R |^2 \dr^\eps_R
      \end{array}
       \qquad\qquad\qquad\qquad\qquad\qquad\qquad\qquad\qquad\qquad\quad\ \  \right\}\ \mathcal{R}_4 \\
     & \qquad\quad \, \triangleq \mathcal{R}_1 + \mathcal{R}_2 + \mathcal{R}_3 + \mathcal{R}_4.
  \end{aligned}
\end{equation}

{\em Step 1. $L^2$-estimates.} Multiplying  the remainder equation \eqref{Remainder-Eq} by $ \p_t \dr^\eps_R $, integrating over $ \mathbb{R}^3 $ and by parts, we  obtain the following equation:
\begin{equation}\label{0orderInit-ptd}
  \begin{aligned}
    \tfrac{1}{2} \tfrac{\d}{\d t} \big{(} | \p_t \dr^\eps_R |^2_{L^2} + \tfrac{1}{\eps} | \nabla \dr^\eps_R |^2_{L^2} \big{)} + \tfrac{1}{\eps} | \p_t \dr^\eps_R |^2_{L^2} = \langle \S (\dre) , \p_t \dre \rangle + \langle \mathcal{R} (\dre) , \p_t \dre \rangle\,.
  \end{aligned}
\end{equation}

{\em (I) Estimates for the singular terms $ \langle \S (\dre) , \p_t \dre \rangle :$ }

For  estimating  $ \langle \S_1 , \p_t \dre \rangle $, we use the  H\"older inequality, the Sobolev embedding theorems, the facts that $ \exp( -\tfrac{t}{\eps} ) \leq 1 $ and $ | \dr_0 | =1 $ to obtain:
\begin{equation*}
  \begin{aligned}
    \tfrac{1}{\sqrt{\eps}} \langle \p_{tt} \dr_0 + \Delta \D(x) \exp(-\tfrac{t}{\eps}) , \p_t \dre \rangle
    & \lesssim ( | \p_{tt} \dr_0 |_{L^\infty_t L^2_x} + | \Delta \D(x) |_{L^2}) | \tfrac{ \p_t \dre }{ \sqrt{\eps} } |_{L^2}
  \end{aligned}
\end{equation*}
and
\begin{equation*}
  \begin{aligned}
    & \tfrac{1}{\sqrt{\eps}} \langle | \nabla \dr_0 |^2 \D(x) \exp( -\tfrac{t}{\eps} ) + 2 \nabla \dr_0 \cdot \nabla \D(x) \exp (-\tfrac{t}{\eps}) \dr_0 , \p_t \dre \rangle \\
    \lesssim & \big{(}| \nabla \dr_0 |^2_{L^\infty} | \D(x) |_{L^2} + | \nabla \dr_0 |_{L^\infty} | \nabla \D(x) |_{L^2} \big{)} | \tfrac{\p_t \dre}{\sqrt{\eps}} |_{L^2} \\
    \lesssim & \big{(} | \nabla \dr_0 |^2_{L^\infty_t H^2_x} + | \nabla \dr_0 |_{L^\infty_t H^2_x} \big{)} | \D(x) |_{H^1} | \tfrac{\p_t \dre}{\sqrt{\eps}} |_{L^2}\,.
  \end{aligned}
\end{equation*}

Similarly:
\begin{equation*}
  \begin{aligned}
    \tfrac{1}{\sqrt{\eps}} \langle | \p_t \dr_0 + \D(x) \exp( -\tfrac{t}{\eps} ) |^2 \dr_0 , \p_t \dre \rangle
    & \lesssim | \p_t \dr_0 + \D(x) \exp( -\tfrac{t}{\eps} ) |^2_{L^4} | \tfrac{ \p_t \dre }{\sqrt{\eps}} |_{L^2}\\
    & \lesssim \big{(} | \p_t \dr_0 |^2_{L^\infty_t L^4_x} + | \D(x) |^2_{L^4} \big{)} | \tfrac{ \p_t \dre }{\sqrt{\eps}} |_{L^2}\\
    & \lesssim \big{(} | \p_t \dr_0 |^2_{L^\infty_t H^1_x} + | \D(x) |^2_{H^1} \big{)} | \tfrac{ \p_t \dre }{\sqrt{\eps}} |_{L^2}\, .
  \end{aligned}
\end{equation*}
Summarizing, we estimate $ \langle \S_1 , \p_t \dre \rangle $ as follows:
\begin{equation}\label{0orderS1}
  \begin{aligned}
    \langle \S_1 , \p_t \dre \rangle \lesssim & \Big{[} | \p_{tt} \dr_0 |_{L^\infty_t L^2_x} + | \p_t \dr_0 |^2_{L^\infty_t H^1_x} + | \D(x) |^2_{H^1} \\
    &+ ( | \nabla \dr_0 |^2_{L^\infty_t H^2_x} +1 ) | \D(x) |_{H^2} \Big{]} \big{|} \tfrac{\p_t \dre}{\sqrt{\eps}} \big{|}_{ L^2_x}\, .
  \end{aligned}
\end{equation}

It is easy to derive the estimates of $ \langle \S_2 , \p_t \dre \rangle $ and $ \langle \S_3 , \p_t \dre \rangle $ as follows:
\begin{equation}\label{0orderS2}
  \begin{aligned}
 \langle \S_2 , \p_t \dre \rangle & = \tfrac{1}{\sqrt{\eps}} \langle 2 ( \nabla \dr_0 \cdot \nabla \dr^\eps_R ) \dr^\eps_R + | \nabla \dr^\eps_R |^2 \dr_0 , \p_t \dre \rangle\\
  & \lesssim ( |\nabla \dr_0 |_{L^\infty} | \dre |_{L^\infty} | \nabla \dre |_{L^2} + | \nabla \dre |^2_{L^4} ) | \tfrac{\p_t \dre}{\sqrt{\eps}} |_{L^2}\\
  & \lesssim ( | \nabla \dr_0 |_{L^\infty_t H^2_x} + 1 ) | \dre |^2_{H^2} | \tfrac{\p_t \dre}{\sqrt{\eps}} |_{L^2}
  \end{aligned}
\end{equation}
and
\begin{equation}\label{0orderS3}
  \begin{aligned}
    \langle \S_3 , \p_t \dre \rangle & = \tfrac{1}{\eps} \langle | \nabla \dr_0 |^2 \dr^\eps_R + 2 ( \nabla \dr_0 \cdot \nabla \dr^\eps_R ) \dr_0 , \p_t \dre \rangle \\
    & \lesssim ( | \nabla \dr_0 |^2_{L^\infty} | \tfrac{\dre}{\sqrt{\eps}}|_{L^2} + | \nabla \dr_0 |_{L^\infty} | \tfrac{\nabla \dre}{\sqrt{\eps}}|_{L^2} ) | \tfrac{\p_t \dre}{\sqrt{\eps}} |_{L^2} \\
    & \lesssim \big{(} | \nabla \dr_0 |_{L^\infty_t H^2_x} + | \nabla \dr_0 |^2_{L^\infty_t H^2_x} \big{)} \big{(} | \tfrac{\dre}{\sqrt{\eps}} |_{L^2} + | \tfrac{\nabla \dre}{\sqrt{\eps}}|_{L^2} \big{)} | \tfrac{\p_t \dre}{\sqrt{\eps}} |_{L^2}\, .
  \end{aligned}
\end{equation}
Hence we have the estimate of $ \langle \S(\dre) , \p_t \dre \rangle $ by combining the inequalities \eqref{0orderS1}, \eqref{0orderS2} and \eqref{0orderS3}:
\begin{equation}\label{0orderS-ptd}
  \langle S(\dre) , \p_t \dre \rangle
  \leq C_{11} \big{(} 1 + | \dre |^2_{H^2} + | \tfrac{\dre}{\sqrt{\eps}}|_{L^2} + | \tfrac{\nabla \dre}{\sqrt{\eps}}|_{L^2} \big{)} | \tfrac{\p_t \dre}{\sqrt{\eps}} |_{L^2}\, ,
\end{equation}
where the constant
$$ C_{11} = C \big{[} | \p_{tt} \dr_0 |_{L^\infty_t L^2_x} + | \p_t \dr_0 |^2_{L^\infty_t H^1_x} + | \D(x) |^2_{H^1} + ( | \nabla \dr_0 |^2_{L^\infty_t H^2_x} +1 ) ( | \D(x) |_{H^2} + 1 ) \big{]} > 0 $$
for some computable positive constant $ C > 0$.

{ \em (II) Estimates for the regular terms $ \langle \mathcal{R}(\dre) , \p_t \dre \rangle $:}

\smallskip
We have divided the regular terms $ \mathcal{R}(\dre) $ into four parts, which we will estimate separately.

\bigskip
\underline{The estimate of $ \langle \mathcal{R}_1 , \p_t \dre \rangle $}.
By the H\"older inequality and the Sobolev embedding theorems we have
\begin{equation*}
  \begin{aligned}
    & \sqrt{\eps} \langle | \p_t \dr_0 + \D(x) \exp(-\tfrac{t}{\eps}) |^2 \D(x) \exp(-\tfrac{t}{\eps}) , \p_t \dre \rangle \\
    \lesssim & \sqrt{\eps} | \p_t \dr_0 + \D(x) \exp(-\tfrac{t}{\eps}) |^2_{L^\infty} |\D(x)|_{L^2} | \p_t \dre |_{L^2} \\
    \lesssim & \sqrt{\eps} \big{(} | \p_t \dr_0 |^2_{L^\infty_t H^2_x} + | \D(x) |^2_{H^2} \big{)} | \D(x) |_{L^2} | \p_t \dre |_{L^2}
  \end{aligned}
\end{equation*}
and
\begin{equation*}
  \begin{aligned}
    \sqrt{\eps} \langle (\nabla \dr_0 \nabla \D(x) ) \exp(-\tfrac{2t}{\eps}) \D(x) , \p_t \dre \rangle
    & \lesssim \sqrt{\eps} | \nabla \dr_0 |_{L^\infty} | \D(x) |_{L^\infty} | \nabla \D(x) |_{L^2} | \p_t \dre |_{L^2}\\
    & \lesssim \sqrt{\eps} | \nabla \dr_0 |_{L^\infty_t H^2_x} | \D(x) |^2_{H^2} | \p_t \dre |_{L^2}\, ,
  \end{aligned}
\end{equation*}
where we have used the fact that $ \exp(-\tfrac{t}{\eps}) \leq 1 $.

The other two terms in $ \langle \mathcal{R}_1 , \p_t \dre \rangle $ are similarly estimated, as follows:
\begin{equation*}
  \begin{aligned}
    \sqrt{\eps} \langle | \nabla \D(x) |^2 \exp(-\tfrac{2t}{\eps}) \dr_0 , \p_t \dre \rangle
    & \lesssim \sqrt{\eps} |\nabla \D(x) |^2_{L^4} | \p_t \dre |_{L^2} \\
    & \lesssim \sqrt{\eps} |\D(x)|^2_{H^2} | \p_t \dre |_{L^2}\, , \\
    \eps^{\frac{3}{2}} \langle | \nabla \D(x) |^2 \exp(-\tfrac{3t}{\eps}) \D(x) , \p_t \dre \rangle
    & \lesssim \eps^{\frac{3}{2}} | \D(x) |_{L^\infty} | \nabla \D(x) |^2_{L^4} | \p_t \dre |_{L^2} \\
    & \lesssim \eps^{\frac{3}{2}} | \D(x) |^3_{H^2} | \p_t \dre |_{L^2}\, .
  \end{aligned}
\end{equation*}
For any small enough $ \eps $  such that $ \eps \in ( 0 , \tfrac{1}{2} ] $, we have $ \eps^{\frac{3}{2}} \leq \eps^{\frac{1}{2}}$. So from the above inequalities, we obtain the following estimate:
\begin{equation}\label{0orderR1}
  \begin{aligned}
    \langle \mathcal{R}_1 , \p_t \dre \rangle
    \lesssim \sqrt{\eps} \big{[} ( | \p_t \dr_0 |^2_{L^\infty_t H^2_x} + | \nabla \dr_0 |^2_{L^\infty_t H^2_x} + | \D(x) |^2_{H^2} ) | \D(x) |_{H^2} + | \D(x) |^2_{H^2} \big{]} | \p_t \dre |_{L^2}\, .
  \end{aligned}
\end{equation}

\underline{The estimate of $ \langle \mathcal{R}_2 , \p_t \dre \rangle $}.
We have the following estimates for the first three terms in $ \langle \mathcal{R}_2 , \p_t \dre \rangle $:
\begin{equation*}
  \begin{aligned}
    \langle | \p_t \dr_0 + \D(x) \exp(-\tfrac{t}{\eps}) |^2 \dr^\eps_R , \p_t \dre \rangle
    & \lesssim | \p_t \dr_0 + \D(x) \exp(-\tfrac{t}{\eps}) |^2_{L^\infty} | \dre |_{L^2} | \p_t \dre |_{L^2} \\
    & \lesssim \big{(} | \p_t \dr_0 |^2_{L^\infty_t H^2_x} + | \D(x) |^2_{H^2} \big{)} | \dre |_{L^2} | \p_t \dre |_{L^2}\, ,\\
    \langle \big{(} \p_t \dr_0 + \D(x) \exp(-\tfrac{t}{\eps}) \big{)} \cdot \p_t \dre \dr_0 , \p_t \dre \rangle
    & \lesssim \big{(} | \p_t \dr_0 |_{L^\infty} + | \D(x) |_{L^\infty} \big{)} | \p_t \dre |^2_{L^2} \\
    & \lesssim \big{(} | \p_t \dr_0 |_{L^\infty_t H^2_x} + | \D(x) |_{H^2} \big{)} | \p_t \dre |^2_{L^2}
  \end{aligned}
\end{equation*}
and
\begin{equation*}
  \begin{aligned}
    &\eps \langle \big{(} \p_t \dr_0 + \D(x) \exp(-\tfrac{t}{\eps}) \big{)} \cdot \D(x) \exp(-\tfrac{t}{\eps}) \p_t \dr^\eps_R , \p_t \dre \rangle \\
    \lesssim &  \eps | \p_t \dr_0 + \D(x) \exp(-\tfrac{t}{\eps}) |_{L^\infty} | \D(x) |_{L^\infty} | \p_t \dre |^2_{L^2} \\
    \lesssim & \eps \big{(} | \p_t \dr_0 |_{L^\infty_t H^2_x} + | \D(x) |_{H^2} \big{)} | \D(x) |_{H^2} | \p_t \dre |^2_{L^2}\, , \\
  \end{aligned}
\end{equation*}
where we have used the H\"older inequality, the Sobolev embedding theorems and the fact that $ | \dr_0 | = 1 $.

As for the following three terms, one can easily obtain:

\begin{equation*}
  \begin{aligned}
    \eps \langle | \nabla \D(x) |^2 \exp(-\tfrac{2t}{\eps}) \dre , \p_t \dre \rangle
    & \lesssim \eps | \nabla \D(x) |^2_{L^\infty} | \dre |_{L^2}| \p_t \dre |_{L^2} \\
    & \lesssim \eps | \D(x) |^2_{H^3} | \dre |_{L^2}| \p_t \dre |_{L^2}\, , \\
    \langle (\nabla \dr_0 \cdot \nabla \D(x) ) \exp(-\tfrac{t}{\eps}) \dre , \p_t \dre \rangle
    & \lesssim | \nabla \dr_0 |_{L^\infty} | \nabla \D(x) |_{L^\infty} | \dre |_{L^2} | \p_t \dre |_{L^2} \\
    & \lesssim | \nabla \dr_0 |_{L^\infty_t H^2_x} | \D(x) |_{H^3} | \dre |_{L^2} | \p_t \dre |_{L^2}\, , \\
    \langle (\nabla \dr_0 \cdot \nabla \dre ) \D(x) \exp(-\tfrac{t}{\eps}) , \p_t \dre \rangle
    & \lesssim | \nabla \dr_0 |_{L^\infty} | \D(x) |_{L^\infty} | \nabla \dre |_{L^2} | \p_t \dre |_{L^2} \\
    & \lesssim | \nabla \dr_0 |_{L^\infty_t H^2_x} | \D(x) |_{H^2} | \nabla \dre |_{L^2} | \p_t \dre |_{L^2}\, , \\
  \end{aligned}
\end{equation*}
where we have used the H\"older inequality, the Sobolev embedding theorems and the bound $ \exp(-\tfrac{t}{\eps}) \leq 1 $.

Similarly as before, we  estimate the last two terms, as follows:
\begin{equation*}
  \begin{aligned}
    \langle (\nabla \D(x) \cdot \nabla \dre ) \exp(-\tfrac{t}{\eps}) \dr_0 , \p_t \dre \rangle
    & \lesssim | \nabla \D(x) |_{L^\infty} | \nabla \dre |_{L^2} | \p_t \dre |_{L^2} \\
    & \lesssim | \D(x) |_{H^3} | \nabla \dre |_{L^2} | \p_t \dre |_{L^2}\, , \\
    \eps \langle ( \nabla \D(x) \cdot \nabla \dre ) \exp(-\tfrac{2t}{\eps}) \D(x) , \p_t \dre \rangle
    & \lesssim \eps | \nabla \D(x) |_{L^\infty} | \D(x) |_{L^\infty} | \nabla \dre |_{L^2} | \p_t \dre |_{L^2} \\
    & \lesssim \eps | \D(x) |^2_{H^3} | \nabla \dre |_{L^2} | \p_t \dre |_{L^2} \, .
  \end{aligned}
\end{equation*}

Combining the above estimates and using that $ \eps \in ( 0, \tfrac{1}{2} ] $, we get
\begin{equation}\label{0orderR2}
  \begin{aligned}
    \langle \mathcal{R}_2 , \p_t \dre \rangle
    \lesssim & \big{(} 1 + | \p_t \dr_0 |_{L^\infty_t H^2_x} + | \nabla \dr_0 |_{L^\infty_t H^2_x} + | \D(x) |_{H^3} \big{)} \big{(} | \p_t \dr_0 |_{L^\infty_t H^2_x} + | \D(x) |_{H^3} \big{)} \\
     & \times \big{(} | \dre |_{L^2} + | \nabla \dre |_{L^2} + | \p_t \dre|_{L^2} \big{)} | \p_t \dre |_{L^2}\, .
  \end{aligned}
\end{equation}

\underline{The estimate of $ \langle \mathcal{R}_3 , \p_t \dre \rangle $}.One can easily  derive the following estimates
\begin{equation*}
  \begin{aligned}
    & \sqrt{\eps} \langle ( \p_t \dr_0 + \D(x) \exp(-\tfrac{t}{\eps}) ) \cdot \p_t \dre \dre , \p_t \dre \rangle
    \lesssim \sqrt{\eps} \big{(} | \p_t \dr_0 |_{L^\infty_t H^2_x} + | \D(x) |_{H^2} \big{)} | \dre |_{H^2} | \p_t \dre|^2_{L^2}\, , \\
    & \sqrt{\eps} \langle (\nabla \D(x) \cdot \nabla \dre) \exp(-\tfrac{t}{\eps}) \dre , \p_t \dre \rangle
    \lesssim \sqrt{\eps} | \D(x) |_{H^3} | \dre |_{H^2} | \nabla \dre |_{L^2} | \p_t \dre |_{L^2}\, ,
  \end{aligned}
\end{equation*}
by using the H\"older inequality and the Sobolev embedding theorems. Recalling that $ |\dr_0| = 1 $ and using the Sobolev embeddings $H^1 \hookrightarrow L^4$ and $H^2 \hookrightarrow L^\infty$ we get
\begin{equation*}
  \begin{aligned}
    \eps^{\frac{3}{2}} \langle \D(x) \exp(-\tfrac{t}{\eps}) | \p_t \dre |^2 , \p_t \dre \rangle
    & \lesssim \eps^{\frac{3}{2}} | \D(x) |_{L^\infty} | \p_t \dre |^2_{L^4} | \p_t \dre |_{L^2} \\
    & \lesssim \eps^{\frac{3}{2}} | \D(x) |_{H^2} | \p_t \dre |^2_{H^1} | \p_t \dre |_{L^2}\, , \\
    \sqrt{\eps} \langle | \p_t \dre |^2 \dr_0 , \p_t \dre \rangle
    & \lesssim \sqrt{\eps} | \p_t \dre |^2_{L^4} | \p_t \dre |_{L^2} \\
    & \lesssim \sqrt{\eps} | \p_t \dre |^2_{H^1} | \p_t \dre |_{L^2}\, , \\
    \sqrt{\eps} \langle | \nabla \dre |^2 \D(x) \exp({-\tfrac{t}{\eps}}) , \p_t \dre \rangle
    & \lesssim \sqrt{\eps} | \D(x) |_{L^\infty} | \nabla \dre |^2_{L^4} | \p_t \dre |_{L^2} \\
    & \lesssim \sqrt{\eps} | \D(x) |_{H^2} | \nabla \dre |^2_{H^1} | \p_t \dre |_{L^2}\, ,
  \end{aligned}
\end{equation*}

The above estimates immediately give the bound on  $ \langle \mathcal{R}_3 , \p_t \dre \rangle $:
\begin{equation}\label{0orderR3}
  \begin{aligned}
    \langle \mathcal{R}_3 , \p_t \dre \rangle
    \lesssim & \sqrt{\eps}  \big{(} 1 + | \p_t \dr_0 |_{L^\infty_t H^2_x} + | \D(x) |_{H^3} \big{)} \big{(} | \p_t \dre |^2_{H^1} + | \nabla \dre |^2_{H^1}  \\
    & + | \dre |_{H^2} | \nabla \dre |_{L^2} + | \dre |_{H^2} | \p_t \dre|_{L^2} \big{)} | \p_t \dre |_{L^2}\\
    \lesssim & \sqrt{\eps}  \big{(} 1 + | \p_t \dr_0 |_{L^\infty_t H^2_x} + | \D(x) |_{H^3} \big{)} \big{(} | \p_t \dre |^2_{H^1} + | \dre |^2_{H^2} \big{)} | \p_t \dre |_{L^2} \, .
  \end{aligned}
\end{equation}

\underline{The estimate of $ \langle \mathcal{R}_4 , \p_t \dre \rangle $}.
The first term in $ \langle \mathcal{R}_4 , \p_t \dre \rangle $ can be bounded as
\begin{equation*}
  \begin{aligned}
    \eps \langle | \p_t \dre |^2 \dre , \p_t \dre \rangle
    \leq & \eps | \dre |_{L^\infty} | \p_t \dre |^2_{L^4} | \p_t \dre |_{L^2} \\
    \lesssim & \eps | \dre |_{H^2} | \p_t \dre |^2_{H^1} | \p_t \dre |_{L^2}
  \end{aligned}
\end{equation*}
by using the H\"older inequality and the Sobolev embedding theorems. The other term can be bounded in a similar way:
\begin{equation*}
  \begin{aligned}
    \langle | \nabla \dre |^2 \dre , \p_t \dre \rangle
    & \lesssim | \dre |_{L^\infty} | \nabla \dre |^2_{L^4} | \p_t \dre |_{L^2} \\
    & \lesssim | \dre |_{H^2} | \nabla \dre |^2_{H^1} | \p_t \dre |_{L^2}\, .
  \end{aligned}
\end{equation*}

Hence we obtain the estimate of $ \langle \mathcal{R}_4 , \p_t \dre \rangle $ as follows:
\begin{equation}\label{0orderR4}
  \langle \mathcal{R}_4 , \p_t \dre \rangle
  \lesssim \big{(} \eps | \p_t \dre |^2_{H^1} + | \nabla \dre |^2_{H^1}\big{)} | \dre |_{H^2} | \p_t \dre |_{L^2}\,.
\end{equation}

\bigskip
Summing up the inequalities \eqref{0orderR1}, \eqref{0orderR2}, \eqref{0orderR3} and \eqref{0orderR4}, we get
\begin{equation}\label{0orderR-ptd}
  \begin{aligned}
    \langle \mathcal{R}(\dre) , \p_t \dre \rangle
    & \leq C_{12} \Big{[} \sqrt{\eps} + | \dre |_{H^1} + | \p_t \dre |_{L^2} + \sqrt{\eps} | \p_t \dre |^2_{H^1} + \sqrt{\eps} | \dre |^2_{H^2} \\
    & \quad + | \dre |_{H^2} \big{(} \eps | \p_t \dre |^2_{H^1} + | \nabla \dre |^2_{H^1} \big{)}\Big{]} | \p_t \dre |_{L^2}\, ,
  \end{aligned}
\end{equation}
where
$$ C_{12} = C \big{(} 1 + | \p_t \dr_0 |^2_{L^\infty_t H^2_x} + | \nabla \dr_0 |^2_{L^\infty_t H^2_x} + | \D(x) |^2_{H^3} \big{)} \big{(} 1 + | \p_t \dr_0 |_{L^\infty_t H^2_x} + | \D(x) |_{H^3} \big{)} > 0 \,,$$
and $C $ is a positive computable constant.

Therefore, plugging the estimates \eqref{0orderS-ptd} and \eqref{0orderR-ptd} into the equality \eqref{0orderInit-ptd}, we have
\begin{equation}\label{0order-ptd}
  \begin{aligned}
    & \tfrac{1}{2} \tfrac{\d}{\d t} \big{(} | \p_t \dr^\eps_R |^2_{L^2} + \tfrac{1}{\eps} | \nabla \dr^\eps_R |^2_{L^2} \big{)} + \tfrac{1}{\eps} | \p_t \dr^\eps_R |^2_{L^2} \\
    \leq & C_{1} \Big{\{} \big{(} 1 + | \dre |^2_{H^2} + | \tfrac{\dre}{\sqrt{\eps}}|_{L^2} + | \tfrac{\nabla \dre}{\sqrt{\eps}}|_{L^2} \big{)} | \tfrac{\p_t \dre}{\sqrt{\eps}} |_{L^2} +  \Big{[} \sqrt{\eps} + | \dre |_{H^1} + | \p_t \dre |_{L^2} \\
    & \quad + \sqrt{\eps} | \p_t \dre |^2_{H^1} + \sqrt{\eps} | \dre |^2_{H^2} + | \dre |_{H^2} \big{(} \eps | \p_t \dre |^2_{H^1} + | \nabla \dre |^2_{H^1} \big{)}\Big{]} | \p_t \dre |_{L^2} \Big{\}} \, ,
  \end{aligned}
\end{equation}
where the constant
$$ C_1 = C ( 1 + |\D(x)|^3_{H^3} + |\partial_{tt} \dr_0|_{L^\infty(0,T;L^2)} + |\partial_t \dr_0|^3_{L^\infty(0,T;H^2)} + |\nabla \dr_0|^4_{L^\infty(0,T;H^2) } ) > 0  $$
for some computable positive constant $C$.

\bigskip
{\em (III) Estimates of the norm $|\dr_R^\eps|_{L^2}$:}

Observing that the norm $|\dre|_{L^2}$ appearing on the right hand side of \eqref{0order-ptd} is not yet controlled, we need additional work to estimate $|\dre|_{L^2}$. In order to do this it is natural to multiply  the equation of  the reminder term \eqref{Remainder-Eq} by  $ \dre $, integrate over $ \mathbb{R}^3 $ and  by parts, and use the identity:
\begin{equation*}
  \begin{aligned}
     \langle \p_{tt} \dre , \dre \rangle
    =& \tfrac{\d }{\d t} \langle \p_t \dre , \dre \rangle - | \p_t \dre |^2_{L^2} \\
    =&  \tfrac{1}{2} \tfrac{\d }{\d t}\big{(} | \p_t \dre + \dre |^2_{L^2} - | \p_t \dre |^2_{L^2} - | \dre |^2_{L^2} \big{)} - | \p_t \dre |^2_{L^2} \, ,
  \end{aligned}
\end{equation*}
in order to get
\begin{equation}\label{0orderInit-d}
  \begin{aligned}
    \tfrac{1}{2} \tfrac{\d }{\d t} & \Big{[} | \p_t \dre + \dre |^2_{L^2} + \big{(} \tfrac{1}{\eps} - 1 \big{)} | \dre |^2_{L^2} - | \p_t \dre |^2_{L^2} \Big{]} - | \p_t \dre |^2_{L^2} + \tfrac{1}{\eps} | \nabla \dre |^2_{L^2} \\
   & = \langle \S (\dre) , \dre \rangle + \langle \mathcal{R} (\dre) , \dre \rangle\, .
  \end{aligned}
\end{equation}
Using the estimates \eqref{0orderS-ptd} and \eqref{0orderR-ptd} previously derived for bounding the terms  $ \langle \S (\dre) , \p_t \dre \rangle $ and $ \langle \mathcal{R} (\dre) , \p_t \dre \rangle $, we can analogously estimate the terms $ \langle \S (\dre) , \dre \rangle $ and $ \langle \mathcal{R} (\dre) , \dre \rangle $ as follows:

\begin{equation}\label{0orderS-d}
  \langle S(\dre) , \dre \rangle
  \leq C_{11} \big{(} 1 + | \dre |^2_{H^2} + | \tfrac{\dre}{\sqrt{\eps}}|_{L^2} + | \tfrac{\nabla \dre}{\sqrt{\eps}}|_{L^2} \big{)} | \tfrac{\dre}{\sqrt{\eps}} |_{L^2}\, ,
\end{equation}
and
\begin{equation}\label{0orderR-d}
  \begin{aligned}
    \langle \mathcal{R} (\dre) , \dre \rangle
    & \leq C_{12} \Big{[} \sqrt{\eps} + | \dre |_{H^1} + | \p_t \dre |_{L^2} + \sqrt{\eps} | \p_t \dre |^2_{H^1} + \sqrt{\eps} | \dre |^2_{H^2} \\
    & \quad + | \dre |_{H^2} \big{(} \eps | \p_t \dre |^2_{H^1} + | \nabla \dre |^2_{H^1} \big{)}\Big{]} | \dre |_{L^2}\, .
  \end{aligned}
\end{equation}
So plugging \eqref{0orderS-d} and \eqref{0orderR-d} into \eqref{0orderInit-d} we obtain:
\begin{equation}\label{0order-d}
  \begin{aligned}
    & \tfrac{1}{2} \tfrac{\d }{\d t} \Big{[} | \p_t \dre + \dre |^2_{L^2} + \big{(} \tfrac{1}{\eps} - 1 \big{)} | \dre |^2_{L^2} - | \p_t \dre |^2_{L^2} \Big{]} - | \p_t \dre |^2_{L^2} + \tfrac{1}{\eps} | \nabla \dre |^2_{L^2} \\
    \leq & C_{1} \Big{\{} \big{(} 1 + | \dre |^2_{H^2} + | \tfrac{\dre}{\sqrt{\eps}}|_{L^2} + | \tfrac{\nabla \dre}{\sqrt{\eps}}|_{L^2} \big{)} | \tfrac{\dre}{\sqrt{\eps}} |_{L^2} +  \Big{[} \sqrt{\eps} + | \dre |_{H^1} + | \p_t \dre |_{L^2} \\
    & \quad + \sqrt{\eps} | \p_t \dre |^2_{H^1} + \sqrt{\eps} | \dre |^2_{H^2} + | \dre |_{H^2} \big{(} \eps | \p_t \dre |^2_{H^1} + | \nabla \dre |^2_{H^1} \big{)}\Big{]} | \dre |_{L^2} \Big{\}} \, .
  \end{aligned}
\end{equation}

Multiplying  the inequality \eqref{0order-d} by $ \tfrac{1}{2} $ and then adding it to the inequality \eqref{0order-ptd}, we get the $L^2$-energy estimate:
\begin{equation}\label{0orderEst}
  \begin{aligned}
      \tfrac{1}{4} \tfrac{\d }{\d t} & \Big{[} | \p_t \dre |^2_{L^2} +  | \p_t \dre + \dre |^2_{L^2} + \big{(} \tfrac{1}{\eps} - 1 \big{)} | \dre |^2_{L^2} \\
     &+ \tfrac{2}{\eps} | \nabla \dre |^2_{L^2} \Big{]}  + (\tfrac{1}{\eps}-\tfrac{1}{2}) | \p_t \dre |^2_{L^2} + \tfrac{1}{2\eps} | \nabla \dre |^2_{L^2} \\
    &\leq  C_{1} \Big{\{} \big{(} 1 + | \dre |^2_{H^2} + | \tfrac{\dre}{\sqrt{\eps}} |_{L^2} + | \tfrac{\nabla \dre}{\sqrt{\eps}}|_{L^2} \big{)} \big{(} | \tfrac{\p_t \dre}{\sqrt{\eps}} |_{L^2} + | \tfrac{\dre}{\sqrt{\eps}} |_{L^2} ) \\
    & \quad+ \Big{[} \sqrt{\eps} + | \dre |_{H^1} + | \p_t \dre |_{L^2} + \sqrt{\eps} | \p_t \dre |^2_{H^1} + \sqrt{\eps} | \dre |^2_{H^2} \\
    & \quad + | \dre |_{H^2} \big{(} \eps | \p_t \dre |^2_{H^1} + | \nabla \dre |^2_{H^1} \big{)}\Big{]}
    \big{(} | \p_t \dre |_{L^2} + | \dre |_{L^2} \big{)} \Big{\}} \, ,
  \end{aligned}
\end{equation}
where the constant
$$ C_1 = C ( 1 + |\D(x)|^3_{H^3} + |\partial_{tt} \dr_0|_{L^\infty(0,T;L^2)} + |\partial_t \dr_0|^3_{L^\infty(0,T;H^2)} + |\nabla \dr_0|^4_{L^\infty(0,T;H^2) } ) > 0  $$
for some computable positive constant $C$.

\bigskip
{\em Step 2. Higher order estimates. }
In order to use the inequality \eqref{0orderEst} we also need a higher order estimate. To obtain this we take $ \nabla^k ( k = 1, 2 )$ in the equation \eqref{Remainder-Eq}, we multiply it  by $ \nabla^k \p_t \dre $, integrate over $ \mathbb{R}^3 $ and  by parts, thus obtaining the following equality
\begin{equation}\label{korderInit-ptd}
  \begin{aligned}
    & \tfrac{1}{2} \tfrac{\d}{\d t} \big{(} | \p_t \nabla^k \dr^\eps_R |^2_{L^2} + \tfrac{1}{\eps} | \nabla^{k+1} \dr^\eps_R |^2_{L^2} \big{)} + \tfrac{1}{\eps} | \p_t \nabla^k \dr^\eps_R |^2_{L^2}\\
     = & \langle \nabla^k \S (\dre) , \p_t \nabla^k \dre \rangle + \langle \nabla^k \mathcal{R} (\dre) , \p_t \nabla^k \dre \rangle
  \end{aligned}
\end{equation}

\bigskip
{ \em (I) Estimates of the singular terms $ \langle \nabla^k \S(\dre) , \p_t \nabla^k \dre \rangle $:}

The singular terms  can be divided into three parts: $ \langle \nabla^k \S_i , \p_t \nabla^k \dre \rangle (i = 1, 2, 3)$ which we estimate separately.

\underline{For the term $ \langle \nabla^k \S_1 , \p_t \nabla^k \dre \rangle $}, by using the H\"older inequality and the Sobolev embedding theorems, we obtain:
\begin{equation*}
  \begin{aligned}
    \tfrac{1}{\sqrt{\eps}} \langle \nabla^k \big{(} \p_{tt} \dr_0 + \Delta \D(x) \exp(-\tfrac{t}{\eps})\big{)} , \p_t \nabla^k \dre \rangle
    \lesssim ( | \p_{tt} \dr_0 |_{L^\infty_t H^2_x} + | \D(x) |_{H^4} ) \big{|} \tfrac{\p_t \dre}{\sqrt{\eps}} \big{|}_{H^2}
  \end{aligned}
\end{equation*}
and
\begin{equation*}
  \begin{aligned}
    & \tfrac{1}{\sqrt{\eps}} \langle \nabla^k ( | \p_t \dr_0 + \D(x) \exp(-\tfrac{t}{\eps})|^2 \dr_0 ) , \p_t \nabla^k \dre \rangle\\
    \lesssim & \tfrac{1}{\sqrt{\eps}}  \sum_{ \substack{i+j+e=k \\ e \geqslant 1} } | \langle \nabla^i(\p_t \dr_0 + \D(x)) \nabla^j(\p_t \dr_0 + \D(x)) \nabla^e \dr_0 , \p_t \nabla^k \dre \rangle|\\
    & + \tfrac{1}{\sqrt{\eps}}  \sum_{ \substack{i+j=k} } | \langle \nabla^i(\p_t \dr_0 + \D(x)) \nabla^j(\p_t \dr_0 + \D(x)) \dr_0 , \p_t \nabla^k \dre \rangle|\\
    \lesssim & ( 1 + | \nabla \dr_0 |_{L^\infty_t H^2_x} ) ( | \p_t \dr_0 |^2_{L^\infty_t H^2_x} + | \D(x) |^2_{H^2} ) \big{|} \tfrac{\p_t \dre}{\sqrt{\eps}} \big{|}_{H^2}\, .
  \end{aligned}
\end{equation*}
Similarly as for estimating  $ \tfrac{1}{\sqrt{\eps}} \langle \nabla^k ( | \p_t \dr_0 + \D(x) \exp(-\tfrac{t}{\eps})|^2 \dr_0 ) , \p_t \nabla^k \dre \rangle $, we can easily get the following estimates:
\begin{equation*}
  \begin{aligned}
    & \tfrac{1}{\sqrt{\eps}} \langle \nabla^k \big{(} | \nabla \dr_0 |^2 \D(x) \exp(-\tfrac{t}{\eps}) \big{)} , \p_t \nabla^k \dre \rangle \\
    \lesssim & \tfrac{1}{\sqrt{\eps}} \sum_{ \substack{i+j+e=k}} |\langle \nabla^{i+1} \dr_0 \nabla^{j+1} \dr_0 \nabla^e \D(x) , \p_t \nabla^k \dre \rangle| \\
    \lesssim & | \nabla \dr_0 |^2_{L^\infty_t H^2_x} | \D(x) |_{H^3} | \tfrac{\p_t \dre}{\sqrt{\eps}} |_{H^2}
  \end{aligned}
\end{equation*}
and
\begin{equation*}
  \begin{aligned}
    & \tfrac{1}{\sqrt{\eps}} \langle \nabla^k \big{(} \nabla \dr_0 \cdot \nabla \D(x) \exp(-\tfrac{t}{\eps}) \dr_0 \big{)} , \p_t \nabla^k \dre \rangle \\
    \lesssim & \tfrac{1}{\sqrt{\eps}} \sum_{ \substack{i+j+e=k \\ e \geqslant 1}} |\langle \nabla^{i+1} \dr_0 \cdot \nabla^{j+1} \D(x)\nabla^e \dr_0 , \p_t \nabla^k \dre \rangle| \\
    & + \tfrac{1}{\sqrt{\eps}} \sum_{ \substack{i+j=k}} |\langle \nabla^{i+1} \dr_0 \cdot \nabla^{j+1} \D(x) \dr_0 , \p_t \nabla^k \dre \rangle|  \\
    \lesssim & (1 + | \nabla \dr_0 |_{L^\infty_t H^2_x}) | \nabla \dr_0 |_{L^\infty_t H^3_x} | \D(x) |_{H^3} | \tfrac{\p_t \dre}{\sqrt{\eps}} |_{H^2} \, . \\
  \end{aligned}
\end{equation*}
Thus we have the estimate of $ \langle \nabla ^k \S_1 , \p_t \nabla^k \dre \rangle$ as follows:
\begin{equation}\label{korderS1}
  \begin{aligned}
    \langle \nabla ^k \S_1 , \p_t \nabla^k \dre \rangle
    \lesssim & \Big{\{} (1 + | \nabla \dr_0 |_{L^\infty_t H^2_x}) ( | \p_t \dr_0 |^2_{L^\infty_t H^3_x} + | \nabla \dr_0 |^2_{L^\infty_t H^3_x} + | \D(x) |^2_{H^3} ) \\
    & +| \p_{tt} \dr_0 |_{L^\infty_t H^2_x} + |\D(x)|_{H^4} \Big{\}} \big{|} \tfrac{\p_t \dre}{\sqrt{\eps}} \big{|}_{H^2} \, .
  \end{aligned}
\end{equation}

\underline{For the term $ \langle \nabla^k S_2 , \p_t \nabla^k \dre \rangle $}, we can also use the H\"older inequality and Soblev embedding theorems to get
\begin{equation*}
  \begin{aligned}
    & \tfrac{1}{\sqrt{\eps}} \langle \nabla^k \big{(} ( \nabla \dr_0 \cdot \nabla \dre ) \dre \big{)} , \p_t \nabla^k \dre \rangle \\
    \lesssim & \tfrac{1}{\sqrt{\eps}} \sum_{\substack{i+j+e=k \\ e \geqslant 1}} |\langle \nabla^{i+1} \dr_0 \cdot \nabla^{j+1} \dre \nabla^{e} \dre , \p_t \nabla^k \dre \rangle| \\
    & + \tfrac{1}{\sqrt{\eps}} \sum_{\substack{i+j=k}} |\langle \nabla^{i+1} \dr_0 \cdot  \nabla^{j+1} \dre \dre , \p_t \nabla^k \dre \rangle| \\
    \lesssim & | \nabla \dr_0 |_{L^\infty_t H^4_x} | \dre |_{H^2} | \nabla \dre |_{H^2} | \tfrac{\p_t \dre}{\sqrt{\eps}} |_{H^2} \, ,
  \end{aligned}
\end{equation*}
and
\begin{equation*}
  \begin{aligned}
    & \tfrac{1}{\sqrt{\eps}} \langle \nabla^k \big{(} | \nabla \dre |^2 \dr_0 \big{)} , \p_t \nabla^k \dre \rangle \\
    \lesssim & \tfrac{1}{\sqrt{\eps}} \sum_{ \substack{i+j+e=k \\ e \geqslant 1}} |\langle \nabla^{i+1} \dre \nabla^{j+1} \dre \nabla^e \dr_0 , \p_t \nabla^k \dre \rangle| \\
    & + \tfrac{1}{\sqrt{\eps}} \sum_{ \substack{i+j=k}} |\langle \nabla^{i+1} \dre \nabla^{j+1} \dre \dr_0 , \p_t \nabla^k \dre \rangle| \\
    \lesssim & ( 1 + | \nabla \dr_0 |_{L^\infty_t H^3_x}) | \nabla \dre |^2_{H^2} | \tfrac{\p_t \dre}{\sqrt{\eps}} |_{H^2}\, .
  \end{aligned}
\end{equation*}
Summarizing, we obtain
\begin{equation}\label{korderS2}
  \begin{aligned}
    \langle \nabla^k S_2 , \p_t \nabla^k \dre \rangle
    \lesssim ( 1 + | \nabla \dr_0 |_{L^\infty_t H^4_x}) ( | \dre |_{H^2}+ | \nabla \dre |_{H^2}  ) | \nabla \dre |_{H^2} | \tfrac{\p_t \dre}{\sqrt{\eps}} |_{H^2} \, .
  \end{aligned}
\end{equation}

\underline{For the estimate of $ \langle \nabla^k \S_3 , \p_t \nabla^k \dre\rangle $}, we get the estimate of the first term by using again the H\"older inequality and the Sobolev embedding theorems:
\begin{equation*}
  \begin{aligned}
    & \tfrac{1}{\eps} \langle \nabla^k ( |\nabla \dr_0|^2 \dre ) , \p_t \nabla^k \dre \rangle \\
    = & \tfrac{1}{\eps} \sum_{ \substack{i+j+e=k}} \langle \nabla^{i+1} \dr_0    \nabla^{j+1} \dr_0 \nabla^e \dre , \p_t \nabla^k \dre \rangle \\
    \lesssim & | \nabla \dr_0 |^2_{L^\infty_t H^4_x} | \tfrac{\dre}{\sqrt{\eps}} |_{H^2} | \tfrac{\p_t \dre}{\sqrt{\eps}} |_{H^2}\, .
  \end{aligned}
\end{equation*}
Recalling that $ | \dr_0 | = 1 $, one can easily  estimate the second term
\begin{equation*}
  \begin{aligned}
    & \tfrac{1}{\eps} \langle \nabla^k \big{(} ( \nabla \dr_0 \cdot \nabla \dre ) \dr_0 \big{)} , \p_t \nabla^k \dre \rangle \\
    \lesssim & \tfrac{1}{\eps} \sum_{ \substack{i+j+e=k \\ e\geqslant 1}} |\langle \nabla^{i+1} \dr_0 \cdot \nabla^{j+1} \dre \nabla^e \dr_0 , \p_t \nabla^k \dre \rangle| \\
    & + \tfrac{1}{\eps} \sum_{ \substack{i+j=k}} |\langle \nabla^{i+1} \dr_0 \cdot \nabla^{j+1} \dre \dr_0 , \p_t \nabla^k \dre \rangle| \\
    \lesssim & ( 1 + | \nabla \dr_0 |_{L^\infty_t H^3_x} ) | \nabla \dr_0 |_{L^\infty_t H^4_x} | \tfrac{\nabla \dre}{\sqrt{\eps}} |_{H^2} | \tfrac{\p_t \dre}{\sqrt{\eps}} |_{H^2}\, .
  \end{aligned}
\end{equation*}
Thus by the above two estimates we have
\begin{equation}\label{korderS3}
  \begin{aligned}
    \langle \nabla^k \S_3 , \p_t \nabla_k \dre \rangle
    \lesssim ( 1 + | \nabla \dr_0 |_{L^\infty_t H^4_x} ) | \nabla \dr_0 |_{L^\infty_t H^4_x} \big{(} | \tfrac{\dre}{\sqrt{\eps}} |_{H^2} + | \tfrac{\nabla \dre}{\sqrt{\eps}} |_{H^2} \big{)} | \tfrac{\p_t \dre}{\sqrt{\eps}} |_{H^2}\, .
  \end{aligned}
\end{equation}
Then the inequalities \eqref{korderS1}, \eqref{korderS2} and \eqref{korderS3} give the following estimate
\begin{equation}\label{korderS-ptd}
  \begin{aligned}
    \langle \nabla^k \S(\dre) , \p_t \nabla^k \dre \rangle
    \leq C_{k1} \big{(} 1 + | \dre |^2_{H^2} + | \nabla \dre |^2_{H^2} + | \tfrac{\dre}{\sqrt{\eps}} |_{H^2} + | \tfrac{\nabla \dre}{\sqrt{\eps}} |_{H^2} \big{)} | \tfrac{\p_t \dre}{\sqrt{\eps}} |_{H^2}\, ,
  \end{aligned}
\end{equation}
where the positive constant $C_{k1}$ is
$$ C_{k1} = C \Big{\{} \big{(} 1 + | \nabla \dr_0 |_{L^\infty_t H^4_x} \big{)} \big{(} 1+ | \p_t \dr_0 |^2_{L^\infty_t H^3_x} + | \nabla \dr_0 |^2_{L^\infty_t H^4_x} + | \D(x) |^2_{H^3} \big{)} + | \p_{tt} \dr_0 |_{L^\infty_t H^2_x} + |\D(x)|_{H^4} \Big{\}}  $$
for some computable positive constant $ C $.

\bigskip
{ \em (II) Estimates of the regular terms $ \langle \nabla^k \mathcal{R}(\dre) , \p_t \nabla^k \dre \rangle $:}

Finally, we turn to estimating the  regular terms $ \langle \nabla^k \mathcal{R} (\dre) , \p_t \nabla^k \dre \rangle $, which are divided into four parts: $ \langle \nabla^k \mathcal{R}_i , \p_t \nabla^k \dre \rangle (i =1, 2, 3) $.

\underline{For the terms $ \langle \nabla^k \mathcal{R}_1 , \p_t \nabla^k \dre \rangle $ }, by the H\"older inequality and the Sobolev embedding theorems we have:
\begin{equation*}
  \begin{aligned}
    & \sqrt{\eps} \langle \nabla^k \big{(} | \p_t \dr_0 + \D(x) \exp(-\tfrac{t}{\eps}) |^2  \D(x) \exp(-\tfrac{t}{\eps}) \big{)} , \p_t \nabla^k \dre \rangle \\
    \lesssim & \sqrt{\eps} \sum_{\substack{i+j+e=k}} |\langle \nabla^i ( \p_t \dr_0 + \D(x) ) \nabla^j ( \p_t \dr_0 + \D(x)) \nabla^e \D(x) , \p_t \nabla^k \dre \rangle| \\
    \lesssim & \sqrt{\eps} |\D(x)|_{H^3} ( |\D(x)|^2_{H^2} + |\p_t \dr_0|^2_{L^\infty_t H^2_x}) |\p_t \dre|_{H^2}\, ,
  \end{aligned}
\end{equation*}
and
\begin{equation*}
  \begin{aligned}
    & \sqrt{\eps} \langle \nabla^k \big{(} |\nabla \D(x)|^2 \exp(-\tfrac{2t}{\eps}) \dr_0 \big{)} , \p_t \nabla^k \dre \rangle \\
    \lesssim & \sqrt{\eps} \sum_{\substack{i+j+e=k \\ e \geqslant 1}} |\langle \nabla^{i+1} \D(x) \nabla^{j+1} \D(x) \nabla^e \dr_0 , \p_t \nabla^k \dr_0 \rangle| \\
    & + \sqrt{\eps} \sum_{ \substack{i+j=k} } |\langle \nabla^{i+1} \D(x) \nabla^{j+1} \D(x) \dr_0 , \p_t \nabla^k \dre \rangle| \\
    \lesssim & \sqrt{\eps} ( 1 + | \nabla \dr_0 |_{L^\infty_t H^2_x} )| \D(x) |^2_{H^3} | \p_t \dre |_{H^2}\, .
  \end{aligned}
\end{equation*}
We can estimate the following two terms in a similar way, hence we get the following inequalities:
\begin{equation*}
  \begin{aligned}
    & \eps^{\frac{3}{2}} \langle \nabla^k \big{(} | \nabla \D(x) |^2 \exp(-\tfrac{3t}{\eps}) \D(x) \big{)} , \p_t \nabla^k \dre \rangle \\
    \lesssim & \eps^{\frac{3}{2}} \sum_{ \substack{i+j+e=k}} |\langle \nabla^{i+1} \D(x) \nabla^{j+1} \D(x) \nabla^e \D(x) , \p_t \nabla^k \dre \rangle|\\
    \lesssim & \eps^{\frac{3}{2}} |\D(x)|^3_{H^3} |\p_t \dre|_{H^2}
  \end{aligned}
\end{equation*}
and
\begin{equation*}
  \begin{aligned}
    & \sqrt{\eps} \langle \nabla^k \big{(} \nabla \dr_0 \cdot \nabla \D(x) ) \exp(-\tfrac{2t}{\eps}) \D(x) \big{)} , \p_t \nabla^k \dre \rangle \\
    \lesssim & \sqrt{\eps} \sum_{ \substack{i+j+e=k}} |\langle \nabla^{i+1} \dr_0 \nabla^{j+1} \D(x) \nabla^e \D(x) , \p_t \nabla^k \dre \rangle|\\
    \lesssim & \sqrt{\eps} | \nabla \dr_0 |_{L^\infty_t H^4_x} | \D(x) |^2_{H^3} | \p_t \dre |_{H^2} \, .
  \end{aligned}
\end{equation*}
 So we have the estimate of $ \langle \nabla^k \mathcal{R}_1 , \p_t \nabla^k \dre \rangle $ as follows:
\begin{equation}\label{korderR1}
  \begin{aligned}
    \langle \nabla^k \mathcal{R}_1 , \p_t \nabla^k \dre \rangle
    \lesssim& \sqrt{\eps} ( 1 + |\nabla \dr_0|_{L^\infty_t H^4_x} + |\D(x)|_{H^3} )\\
     & \times ( | \p_t \dr_0 |^2_{L^\infty_t H^2_x}+ | \D(x)|^2_{H^3} ) | \p_t \dre |_{H^2}\, .
  \end{aligned}
\end{equation}

\underline{For the terms $ \langle \nabla^k \mathcal{R}_2 , \p_t \nabla^k \dre \rangle $ }, by using yet again the H\"older inequality and the Sobolev embedding theorems, we have

\begin{equation*}
  \begin{aligned}
    & \eps \langle \nabla^k \big{(} \p_t \dr_0+\D(x) \exp(-\tfrac{t}{\eps})\big{)} \p_t \dre D(x)\exp(-\tfrac{t}{\eps}) , \p_t \nabla^k \dre \rangle \\
    \lesssim & \eps \sum_{ \substack{i+j+e=k \\ e \geqslant 1} } |\langle  \big{(} \nabla^i \p_t \dr_0 +\nabla^i \D(x) \exp(-\tfrac{t}{\eps})\big{)}\nabla^j \p_t \dre \nabla^e D(x)\exp(-\tfrac{t}{\eps}), \p_t \nabla^k \dre \rangle| \\
    &+\eps \sum_{ \substack{i+j=k } } |\langle \big{(}  \nabla^i \p_t \dr_0 +\nabla^i \D(x) \exp(-\tfrac{t}{\eps})\big{)}\nabla^j \p_t \dre  D(x)\exp(-\tfrac{t}{\eps}), \p_t \nabla^k \dre \rangle| \\
    \lesssim & \eps (| \p_t \dr_0 |_{L^\infty_t H^4_x} +| \D(x) |_{H^4})  | \D(x) |_{H^4}|\p_t \dre |^2_{H^2}\, ,
  \end{aligned}
\end{equation*}
Similarly as for estimating the term $ \eps \langle \nabla^k \big{(} \p_t \dr_0+\D(x) \exp(-\tfrac{t}{\eps})\big{)} \p_t \dre D(x)\exp(-\tfrac{t}{\eps}) , \p_t \nabla^k \dre \rangle$ we can easily obtain the following estimates:
\begin{equation*}
  \begin{aligned}
    & \langle \nabla^k \big{(} | \p_t \dr_0 + \D(x) \exp(-\tfrac{t}{\eps}) |^2 \dre \big{)} , \p_t \nabla^k \dre \rangle
    \lesssim ( | \p_t \dr_0 |^2_{L^\infty_t H^4_x} + | \D(x) |^2_{H^4} ) | \dre |_{H^2} | \p_t \dre |_{H^2}\, ,  \\
    & \langle \nabla^k \big{(} \nabla \dr_0 \cdot \nabla \dre \D(x) \exp(-\tfrac{t}{\eps}) \big{)} , \p_t \nabla^k \dre \rangle
    \lesssim ( | \nabla \dr_0 |^2_{L^\infty_t H^4_x} + | \D(x) |^2_{H^4} ) | \nabla \dre |_{H^2} | \p_t \dre |_{H^2} \, , \\
    & \langle \nabla^k \big{(} \nabla \dr_0 \cdot \nabla \D(x) \exp(-\tfrac{t}{\eps}) \dre \big{)} , \p_t \nabla^k \dre \rangle
    \lesssim ( | \nabla \dr_0 |^2_{L^\infty_t H^4_x} + | \D(x) |^2_{H^4} ) | \dre |_{H^2} | \p_t \dre |_{H^2} \, .
  \end{aligned}
\end{equation*}
Observing the structure of the terms $ \eps \langle \nabla^k \big{(} |\nabla \D(x)|^2 \exp(-\tfrac{2t}{\eps}) \dre \big{)} , \p_t \nabla^k \dre \rangle $ and $ \eps \langle \nabla^k \big{(} \nabla \D(x) \cdot \nabla \dre  \exp(-\tfrac{2t}{\eps}) \D(x) \big{)} , \p_t \nabla^k \dre \rangle $ one can similarly estimate the following terms:
\begin{equation*}
  \begin{aligned}
    & \eps \langle \nabla^k \big{(} |\nabla \D(x)|^2 \exp(-\tfrac{2t}{\eps}) \dre \big{)} , \p_t \nabla^k \dre \rangle
      \lesssim \eps | \D(x) |^2_{H^5} | \dre |_{H^2} | \p_t \dre |_{H^2}\, , \\
    & \eps \langle \nabla^k \big{(} \nabla \D(x) \cdot \nabla \dre  \exp(-\tfrac{2t}{\eps}) \D(x) \big{)} , \p_t \nabla^k \dre \rangle
    \lesssim \eps | \D(x) |^2_{H^5} | \nabla \dre |_{H^2} | \p_t \dre |_{H^2}\, .
  \end{aligned}
\end{equation*}

Furthermore we get:
\begin{equation*}
  \begin{aligned}
    & \langle \nabla^k [ \big{(} \p_t \dr_0 + \D(x) \exp(-\tfrac{t}{\eps}) \big{)}\cdot \p_t \dre \dr_0 ] , \p_t \nabla^k \dre \rangle \\
    \lesssim & \sum_{ \substack{i+j+e=k \\ e \geqslant 1} } |\langle \nabla^i ( \p_t \dr_0 + \D(x) ) \nabla^j \p_t \dre \nabla^e \dr_0 , \p_t \nabla^k \dre \rangle| \\
    & + \sum_{ \substack{i+j=k} } |\langle \nabla^i ( \p_t \dr_0 + \D(x) ) \nabla^j \p_t \dre \dr_0 , \p_t \nabla^k \dre \rangle| \\
    \lesssim & ( 1 + | \nabla \dr_0 |_{L^\infty_t H^3_x}) ( | \p_t \dr_0 |_{L^\infty_t H^4_x} + | \D(x) |_{H^4} ) | \p_t \dre |^2_{H^2}
  \end{aligned}
\end{equation*}
 Similarly as for estimating $ \langle \nabla^k [ \big{(} \p_t \dr_0 + \D(x) \exp(-\tfrac{t}{\eps}) \big{)}\cdot \p_t \dre \dr_0 ] , \p_t \nabla^k \dre \rangle $, one can also get:
\begin{equation*}
  \begin{aligned}
    & \langle \nabla^k \big{(} \nabla \D(x) \cdot \nabla \dre \exp(-\tfrac{t}{\eps}) \dr_0 \big{)} , \p_t \nabla^k \dre \rangle
    \lesssim & ( 1 + | \nabla \dr_0 |_{L^\infty_t H^3_x}) | \D(x) |_{H^5} | \nabla \dre |_{H^2} | \p_t \dre |_{H^2} \, .
  \end{aligned}
\end{equation*}
Thus we have the following estimate of $  \langle \nabla^k \mathcal{R}_2 , \p_t \nabla^k \dre \rangle $:
\begin{equation}\label{korderR2}
  \begin{aligned}
    \langle \nabla^k \mathcal{R}_2 , \p_t \nabla^k \dre \rangle
    \lesssim & ( | \p_t \dr_0 |_{L^\infty_t H^4_x} + | \D(x) |_{H^5} + | \p_t \dr_0 |^2_{L^\infty_t H^4_x} + | \D(x) |^2_{H^5} + | \nabla \dr_0 |^2_{L^\infty_t H^4_x} )\\
    & \times ( | \dre |_{H^2} + | \nabla \dre |_{H^2} + | \p_t \dre |_{H^2} )| \p_t \dre |_{H^2} \, .
  \end{aligned}
\end{equation}

\bigskip
\underline{For the terms $ \langle \nabla^k \mathcal{R}_3 , \p_t \nabla^k \dre \rangle, $ } one can use an estimate  similar to the one for the term $ \eps \langle \nabla^k \big{(} \p_t \dr_0 \cdot \D(x) \exp(-\tfrac{t}{\eps}) \p_t \dre \big{)} , \p_t \nabla^k \dre \rangle $ to get:
\begin{equation*}
  \begin{aligned}
    & \eps^{\frac{3}{2}} \langle \nabla^k \big{(} \D(x) \exp(-\tfrac{t}{\eps}) | \p_t \dre |^2 \big{)} , \p_t \nabla^k \dre \rangle
    \lesssim \eps^{\frac{3}{2}} | \D(x) |_{H^4} | \p_t \dre |^3_{H^2}\, , \\
    & \sqrt{\eps} \langle \nabla^k \big{(} \nabla \D(x) \cdot \nabla \dre \exp(-\tfrac{t}{\eps}) \dre \big{)} , \p_t \nabla^k \dre \rangle
    \lesssim \sqrt{\eps} | \D(x) |_{H^5} ( | \dre |^2_{H^2} +| \nabla \dre |^2_{H^2} ) | \p_t \dre |_{H^2}\, , \\
    & \sqrt{\eps} \langle \nabla^k \big{(} | \nabla \dre |^2 \D(x) \exp(-\tfrac{t}{\eps}) \big{)} , \p_t \nabla^k \dre \rangle
    \lesssim \sqrt{\eps} | \D(x) |_{H^4} | \nabla \dre |^2_{H^2} | \p_t \dre |_{H^2} \, .
  \end{aligned}
\end{equation*}
Reasoning analogously as in estimating $ \sqrt{\eps} \langle \nabla^k [ \big{(} \p_t \dr_0 + \D(x) \exp(-\tfrac{t}{\eps}) \big{)} \cdot \p_t \dre \dre ] , \p_t \nabla^k \dre \rangle $ we have
\begin{equation*}
  \begin{aligned}
    & \sqrt{\eps} \langle \nabla^k [ \big{(} \p_t \dr_0 + \D(x) \exp(-\tfrac{t}{\eps}) \big{)} \cdot \p_t \dre \dre ] , \p_t \nabla^k \dre \rangle \\
    \lesssim & \sqrt{\eps} \sum_{ \substack{i+j+e=k \\ e \geqslant 1} } |\langle \nabla^i ( \p_t \dr_0 + \D(x) ) \cdot \nabla^j \p_t \dre \nabla^e \dre , \p_t \nabla^k \dre \rangle| \\
    & + \sqrt{\eps} \sum_{ \substack{i+j=k} } |\langle \nabla^i ( \p_t \dr_0 + \D(x) ) \cdot \nabla^j \p_t \dre \dre , \p_t \nabla^k \dre \rangle| \\
    \lesssim & \sqrt{\eps} ( | \p_t \dr_0 |_{L^\infty_t H^4_x} + | \D(x) |_{H^4} ) ( | \dre |_{H^2} + | \nabla \dre |_{H^2} )| \p_t \dre |^2_{H^2}
  \end{aligned}
\end{equation*}
and furthermore, using  that $ | \dr_0 | = 1 $, it is easy to obtain
\begin{equation*}
  \begin{aligned}
    & \sqrt{\eps} \langle \nabla^k ( | \p_t \dre |^2 \dr_0 ), \p_t \nabla^k \dre \rangle
    \lesssim \sqrt{\eps} ( 1 + | \nabla \dr_0 |_{L^\infty_t H^3_x} )| \p_t \dre |^3_{H^2}\, .
  \end{aligned}
\end{equation*}
Summarizing, we get the estimate of $ \langle \nabla^k \mathcal{R}_3 , \p_t \nabla^k \dre \rangle $ as follows:
\begin{equation}\label{korderR3}
  \begin{aligned}
    \langle \nabla^k \mathcal{R}_3 , \p_t \nabla^k \dre \rangle
    \lesssim & \sqrt{\eps} ( 1 + | \p_t \dr_0 |_{L^\infty_t H^4_x} + | \nabla \dr_0 |_{L^\infty_t H^3_x} + | \D(x) |_{H^5} ) \\
    & \times ({\color{red} 1+} | \p_t \dre |^2_{H^2} + | \dre |^2_{H^2} + | \nabla \dre |^2_{H^2} ) | \p_t \dre |_{H^2} \, .
  \end{aligned}
\end{equation}

\bigskip
\underline{For the terms $ \langle \nabla^k \mathcal{R}_4 , \p_t \nabla^k \dre \rangle $ }, we get, by the H\"older inequality and Sobolev embedding theorems:
  \begin{align*}
    & \eps \langle \nabla^k ( | \p_t \dre |^2 \dre ) , \p_t \nabla^k \dre \rangle \\
    = & \eps \sum_{ \substack{i+j+e=k \\ e\geqslant 1} } \langle \nabla^i \p_t \dre \nabla^j \p_t \dre \nabla^e \dre , \p_t \nabla^k \dre \rangle \\
    & + \eps \sum_{ \substack{i+j=k} } \langle \nabla^i \p_t \dre \nabla^j \p_t \dre \dre , \p_t \nabla^k \dre \rangle \\
    \lesssim & \eps ( | \dre |_{H^2} + | \nabla \dre |_{H^2} )| \p_t \dre |^3_{H^2}
  \end{align*}
and then, similarly:

\begin{equation*}
  \begin{aligned}
    & \langle \nabla^k ( | \nabla \dre |^2 \dre ) , \p_t \nabla^k \dre \rangle
    \lesssim  ( | \dre |_{H^2} + | \nabla \dre |_{H^2} ) | \nabla \dre |^2_{H^2} | \p_t \dre |_{H^2}\, .
  \end{aligned}
\end{equation*}
So we obtain the estimate of $ \langle \nabla^k \mathcal{R}_4 , \p_t \nabla^k \dre \rangle$ as follows:
\begin{equation}\label{korderR4}
  \begin{aligned}
    \langle \nabla^k \mathcal{R}_4 , \p_t \nabla^k \dre \rangle
    \lesssim ( | \dre |_{H^2} + | \nabla \dre |_{H^2} ) ( | \nabla \dre |^2_{H^2} + \eps | \p_t \dre |^2_{H^2} ) | \p_t \dre |_{H^2}\, .
  \end{aligned}
\end{equation}

Then the inequalities \eqref{korderR1}, \eqref{korderR2}, \eqref{korderR3} and \eqref{korderR4} give the estimate of the regular terms $ \langle \nabla^k \mathcal{R} ( \dre ) , \p_t \nabla^k \dre \rangle $ as follows:
\begin{equation}\label{korderR-ptd}
  \begin{aligned}
    \langle \nabla^k \mathcal{R} (\dre) , \p_t \nabla^k \dre \rangle
    \leq & C_{k2} \big{[} ( | \dre |_{H^2} + | \nabla \dre |_{H^2} ) ( 1 + | \nabla \dre |^2_{H^2} + \eps | \p_t \dre |^2_{H^2}) \\
    & + \sqrt{\eps} ( 1 +  | \p_t \dre |^2_{H^2} + | \dre |^2_{H^2} + | \nabla \dre |^2_{H^2} ) + | \p_t \dre |_{H^2} \big{]} | \p_t \dre |_{H^2}\, ,
  \end{aligned}
\end{equation}
where the constant $C_{k2}$ is
$$ C_{k2} = C \big{(} 1 + |\p_t \dr_0|_{L^\infty_t H^4_x} + |\nabla \dr_0|_{L^\infty_t H^4_x} + |\D(x)|_{H^5} \big{)} \big{(} 1+ |\p_t \dr_0|^2_{L^\infty_t H^4_x} + |\D(x)|^2_{H^5} + |\nabla \dr_0|_{L^\infty_t H^4_x}\big{)} > 0 ,$$
and $ C $ is a computable positive constant.

Therefore, by substituting the inequalities \eqref{korderS-ptd} and \eqref{korderR-ptd} into \eqref{korderInit-ptd} one has
\begin{equation}\label{korder-ptd}
  \begin{aligned}
    & \tfrac{1}{2} \tfrac{\d}{\d t} \big{(} | \p_t \nabla^k \dr^\eps_R |^2_{L^2} + \tfrac{1}{\eps} | \nabla^{k+1} \dr^\eps_R |^2_{L^2} \big{)} + \tfrac{1}{\eps} | \p_t \nabla^k \dr^\eps_R |^2_{L^2}\\
     \leq & C_{k} \Big{\{} \big{(} 1 + | \dre |^2_{H^2} + | \nabla \dre |^2_{H^2} + | \tfrac{\dre}{\sqrt{\eps}} |_{H^2} + | \tfrac{\nabla \dre}{\sqrt{\eps}} |_{H^2} \big{)} | \tfrac{\p_t \dre}{\sqrt{\eps}} |_{H^2} \\
     & + \big{[} ( | \dre |_{H^2} + | \nabla \dre |_{H^2} ) ( 1 + | \nabla \dre |^2_{H^2} + \eps | \p_t \dre |^2_{H^2}) \\
    & + \sqrt{\eps} ( 1 +  | \p_t \dre |^2_{H^2} + | \dre |^2_{H^2} + | \nabla \dre |^2_{H^2} ) + | \p_t \dre |_{H^2} \big{]} | \p_t \dre |_{H^2} \Big{\}} \, ,
  \end{aligned}
\end{equation}
where the positive constant $C_k$ is
$$ C_k = C \big{(} 1 + |\partial_{tt} \dr_0|_{L^\infty(0,T;H^2)} + |\partial_t \dr_0|^3_{L^\infty(0,T;H^4)} + |\nabla \dr_0|^3_{L^\infty(0,T;H^4)} + |\D(x)|^3_{H^5} \big{)} > 0 $$
and $C > 0$ is a computable constant.

\bigskip
{ \em (III) For the estimate of $ | \nabla^k \dre |_{L^2} $ $ (k = 1, 2) $:}

 Applying $ \nabla^k ( k = 1, 2 ) $ to the remainder equation \eqref{Remainder-Eq}, multiplying by $ \nabla^k \dre $, integrating over $ \mathbb{R}^3 $ and by parts, we have
\begin{equation}\label{korderInit-d}
  \begin{aligned}
    \tfrac{1}{2} \tfrac{\d }{\d t} & \Big{[} | \nabla^k \p_t \dre + \nabla^k \dre |^2_{L^2} + \big{(} \tfrac{1}{\eps} - 1 \big{)} | \nabla^k \dre |^2_{L^2} - | \p_t \nabla^k \dre |^2_{L^2} \Big{]} - | \p_t \nabla^k \dre |^2_{L^2} + \tfrac{1}{\eps} | \nabla^{k+1} \dre |^2_{L^2} \\
   & = \langle \nabla^k \S (\dre) , \nabla^k \dre \rangle + \langle \nabla^k \mathcal{R} (\dre) , \nabla^k \dre \rangle\, .
  \end{aligned}
\end{equation}
Similarly as in the estimates of the terms $ \langle \nabla^k \S(\dre) , \p_t \nabla^k \dre \rangle $ and $ \langle \nabla^k \mathcal{R} (\dre) , \p_t \nabla^k \dre \rangle $ in the inequalities \eqref{korderS-ptd} and \eqref{korderR-ptd}, respectively,
we can analogously estimate the terms $ \langle \nabla^k \S(\dre) , \nabla^k \dre \rangle $ and $ \langle \nabla^k \mathcal{R} (\dre) , \nabla^k \dre \rangle $ as follows:
\begin{equation}\label{korderS-d}
  \begin{aligned}
    \langle \nabla^k \S(\dre) , \p_t \nabla^k \dre \rangle
    \leq C_{k1} \big{(} 1 + | \dre |^2_{H^2} + | \nabla \dre |^2_{H^2} + | \tfrac{\dre}{\sqrt{\eps}} |_{H^2} + | \tfrac{\nabla \dre}{\sqrt{\eps}} |_{H^2} \big{)} | \tfrac{\dre}{\sqrt{\eps}} |_{H^2}\, ,
  \end{aligned}
\end{equation}
and
\begin{equation}\label{korderR-d}
  \begin{aligned}
    \langle \nabla^k \mathcal{R} (\dre) , \p_t \nabla^k \dre \rangle
    \leq & C_{k2} \big{[} ( | \dre |_{H^2} + | \nabla \dre |_{H^2} ) ( 1 + | \nabla \dre |^2_{H^2} + \eps | \p_t \dre |^2_{H^2}) \\
    & + \sqrt{\eps} ( 1 +  | \p_t \dre |^2_{H^2} + | \dre |^2_{H^2} + | \nabla \dre |^2_{H^2} ) + | \p_t \dre |_{H^2} \big{]} |\dre |_{H^2}\, .
  \end{aligned}
\end{equation}
By plugging the inequalities \eqref{korderS-d} and \eqref{korderR-d} into the equality \eqref{korderInit-d}, we  get the following estimate:
\begin{equation}\label{korder-d}
  \begin{aligned}
     \tfrac{1}{2} \tfrac{\d }{\d t} & \Big{[} | \nabla^k \p_t \dre + \nabla^k \dre |^2_{L^2} + \big{(} \tfrac{1}{\eps} - 1 \big{)} | \nabla^k \dre |^2_{L^2} \\
    &{\color{red} - | \p_t \nabla^k \dre |^2_{L^2} }\Big{]} - | \p_t \nabla^k \dre |^2_{L^2} + \tfrac{1}{\eps} | \nabla^{k+1} \dre |^2_{L^2} \\
   & \leq C_{k} \Big{\{} \big{(} 1 + | \dre |^2_{H^2} + | \nabla \dre |^2_{H^2} + | \tfrac{\dre}{\sqrt{\eps}} |_{H^2} + | \tfrac{\nabla \dre}{\sqrt{\eps}} |_{H^2} \big{)} | \tfrac{\dre}{\sqrt{\eps}} |_{H^2} \\
   & + \big{[} ( | \dre |_{H^2} + | \nabla \dre |_{H^2} ) ( 1 + | \nabla \dre |^2_{H^2} + \eps | \p_t \dre |^2_{H^2}) \\
   & + \sqrt{\eps} ( 1 +  | \p_t \dre |^2_{H^2} + | \dre |^2_{H^2} + | \nabla \dre |^2_{H^2} ) + | \p_t \dre |_{H^2} \big{]} | \dre |_{H^2} \Big{\}} \, .
  \end{aligned}
\end{equation}

Multiplying  the inequality \eqref{korder-d} by $ \tfrac{1}{2} $ and adding it to the inequality \eqref{korder-ptd}, we obtain the higher order estimate:
\begin{equation}\label{korderEst}
  \begin{aligned}
    & \tfrac{1}{4} \tfrac{\d }{\d t} \Big{[} | \p_t \nabla^k \dre |^2_{L^2} + \big{(} \tfrac{1}{\eps} - 1 \big{)} | \nabla^k \dre |^2_{L^2} +\tfrac{2}{\eps}| \nabla^{k+1} \dre |^2_{L^2} + | \nabla^k \p_t \dre + \nabla^k \dre |^2_{L^2} \Big{]} \\
    & \quad + \big{(} \tfrac{1}{\eps} - \tfrac{1}{2} \big{)} | \p_t \nabla^k \dre |^2_{L^2} + \tfrac{1}{2\eps} | \nabla^{k+1} \dre |^2_{L^2} \\
   & \leq \frac 32  C_{k} \Big{\{} \big{(} 1 + | \dre |^2_{H^2} + | \nabla \dre |^2_{H^2} + | \tfrac{\dre}{\sqrt{\eps}} |_{H^2} + | \tfrac{\nabla \dre}{\sqrt{\eps}} |_{H^2} \big{)} \big{(} | \tfrac{\dre}{\sqrt{\eps}} |_{H^2} + | \tfrac{\p_t \dre}{\sqrt{\eps}} |_{H^2} \big{)}\\
   & \quad + \big{[} ( | \dre |_{H^2} + | \nabla \dre |_{H^2} ) ( 1 + | \nabla \dre |^2_{H^2} + \eps | \p_t \dre |^2_{H^2})  + | \p_t \dre |_{H^2} \\
   & \quad + \sqrt{\eps} ( 1 +  | \p_t \dre |^2_{H^2} + | \dre |^2_{H^2} + | \nabla \dre |^2_{H^2} ) \big{]} \big{(} | \dre |_{H^2} + | \p_t \dre |_{H^2} \big{)} \Big{\}} \,.
  \end{aligned}
\end{equation}

Therefore, combining the $L^2$-estimate \eqref{0orderEst} and the $k$th-order estimate \eqref{korderEst} for $k=1, 2$, we obtain:
\begin{equation}\label{EnergyEst}
  \begin{aligned}
    & \tfrac{1}{4} \tfrac{\d }{\d t} \Big{[} | \p_t \dre |^2_{H^2} + \big{(} \tfrac{1}{\eps} - 1 \big{)} | \dre |^2_{H^2} + \tfrac{2}{\eps}| \nabla \dre |^2_{H^2} + | \p_t \dre +  \dre |^2_{H^2} \Big{]} \\
    & \quad + \big{(} \tfrac{1}{\eps} - \tfrac{1}{2} \big{)} | \p_t \dre |^2_{H^2} + \tfrac{1}{2\eps} | \nabla \dre |^2_{H^2} \\
   & \leq  \tilde C_{k} \Big{\{} \big{(} 1 + | \dre |^2_{H^2} + | \nabla \dre |^2_{H^2} + | \tfrac{\dre}{\sqrt{\eps}} |_{H^2} + | \tfrac{\nabla \dre}{\sqrt{\eps}} |_{H^2} \big{)} \big{(} | \tfrac{\dre}{\sqrt{\eps}} |_{H^2} + | \tfrac{\p_t \dre}{\sqrt{\eps}} |_{H^2} \big{)}\\
   & \quad + \big{[} ( | \dre |_{H^2} + | \nabla \dre |_{H^2} ) ( 1 + | \nabla \dre |^2_{H^2} + \eps | \p_t \dre |^2_{H^2})  + | \p_t \dre |_{H^2} \\
   & \quad + \sqrt{\eps} ( 1 +  | \p_t \dre |^2_{H^2} + | \dre |^2_{H^2} + | \nabla \dre |^2_{H^2} ) \big{]} \big{(} | \dre |_{H^2} + | \p_t \dre |_{H^2} \big{)} \Big{\}} \,,
  \end{aligned}
\end{equation}
where the positive constant $\tilde C_k$ is
$$ \tilde C_k = C \big{(} 1 + |\partial_{tt} \dr_0|_{L^\infty(0,T;H^2)} + |\partial_t \dr_0|^3_{L^\infty(0,T;H^4)} + |\nabla \dr_0|^3_{L^\infty(0,T;H^4)} + |\D(x)|^3_{H^5} \big{)} > 0 $$
and $C > 0$ is a computable constant. Then, by the definition of the energy functionals $ E_{\eps}(t) $ and $ F_{\eps}(t) $, and the condition $0 < \eps < \frac{1}{2}$, the $ H^2$-estimate \eqref{EnergyEst} implies that
\begin{equation*}
  \frac{\d}{\d t} E_{\eps}(t) + 4 F_{\eps}(t)
  \leq C' \Big{\{} E^{\frac{1}{2}}_{\eps}(t) +  E_{\eps}(t) + \eps^{\frac{1}{2}}E^{\frac{3}{2}}_{\eps}(t) + \eps^{\frac{3}{2}} E^2_{\eps}(t) + [1 + E^{\frac{1}{2}}_{\eps}(t) + \eps E_{\eps}(t) ] F^{\frac{1}{2}}_{\eps}(t) \Big{\}} \,,
\end{equation*}
where $C'  > 0$, which immediately implies the claimed inequality \eqref{Energy-Bounds} by using Young's inequality. Consequently, the proof of Lemma \ref{Lm-Unif-Bounds} is completed.
\end{proof}

\section{The Proof of Theorem \ref{Main-Thm}}\label{Sct-Proof}

In this section we will provide the proof of Theorem \ref{Main-Thm},  by using the uniform energy bounds \eqref{Energy-Bounds} in Section \ref{Sct-Unif-Bounds}. Before doing this, we note that for any fixed inertia constant $\eps > 0$ the  well-posedness of the remainder system \eqref{Remainder-Eq}-\eqref{IC-Remainder} can be stated as follows:

\begin{proposition}\label{Prop-WM}
 Given $\dr^{in}:\mathbb{R}^3\to\mathbb{S}^2$ and $\tilde{\dr}^{in}:\mathbb{R}^3\to\mathbb{R}^3$ satisfying $\nabla \dr^{in} \in H^6\, , \ \tilde{\dr}^{in} \in H^4$  with $\dr^{in}\cdot \tilde{\dr}^{in}\equiv 0$, we define $ \D(x) \equiv \tilde{\dr}^{in} (x) - \Delta \dr^{in}(x) - |\nabla \dr^{in}(x)|^2 \dr^{in}(x) $ and denote $M = |\D|^2_{H^2} + 2 |\nabla \D|^2_{H^2} < \infty$.

  Then, for any fixed $\eps \in ( 0, \tfrac{1}{2} )$, there exists a time $T^\eps = \min \{ T , \tfrac{1}{C} \ln \big( \tfrac{1 + \eps M}{\eps ( 1 + M )} \big) \} > 0$, where $T, \ C>0$ are provided in Proposition \ref{Prop-HF}, Lemma \ref{Lm-Unif-Bounds}, respectively, such that the remainder equation \eqref{Remainder-Eq} with the initial conditions \eqref{IC-Remainder} admits a unique solution $\dr^\eps_R \in C ([0,T^\eps); {H}^{3})$ and $\partial_t \dr^\eps_R \in C([0,T^\eps); H^2)$. Moreover, the solution $\dr^\eps_R$ satisfies the inequality
  \begin{equation}\label{Bnds-Energ}
     |\partial_t \dr^\eps_R (t)|^2_{ H^2} + \tfrac{1}{\eps} | \dr^\eps_R (t)|^2_{H^3}  \leq \tfrac{ 2 M e^{C t} }{ 1 + \eps M - \eps ( 1 + M ) e^{C t} }
  \end{equation}
  for all $t \in [ 0 , T^\eps )$.
\end{proposition}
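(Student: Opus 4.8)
The plan is to combine the a priori estimate of Lemma~\ref{Lm-Unif-Bounds} with a standard local-existence and continuation scheme for the semilinear damped wave equation~\eqref{Remainder-Eq}. The crucial simplification is that $\eps>0$ is here \emph{fixed}, so the coefficients $\tfrac1\eps$ and $\tfrac1{\sqrt{\eps}}$ appearing in~\eqref{Remainder-Eq} and in $\mathcal{S}(\dre)$ are merely (large) constants and nothing is singular. I would proceed in three steps: (i) fixed-$\eps$ local well-posedness of~\eqref{Remainder-Eq}--\eqref{IC-Remainder}; (ii) a continuation argument driven by the differential inequality~\eqref{Energy-Bounds} of Lemma~\ref{Lm-Unif-Bounds}; (iii) solving the scalar comparison ODE to obtain the explicit bound~\eqref{Bnds-Energ} and to read off the maximal time $T^\eps$.

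For step (i), I would rewrite~\eqref{Remainder-Eq} as $\p_{tt}\dre+\tfrac1\eps\p_t\dre-\tfrac1\eps\Delta\dre=\mathcal{N}(\dre)$ with $\mathcal{N}(\dre):=\mathcal{S}(\dre)+\mathcal{R}(\dre)$, treating $\dr_0$ and its $t$- and $x$-derivatives (bounded on $[0,T]$ by Proposition~\ref{Prop-HF} under the assumed regularity), $\D(x)$, and $\exp(-\tfrac{t}{\eps})$ as prescribed data. Since $H^2(\R^3)$ is a Banach algebra and $H^2\hookrightarrow L^\infty$, one checks that $\mathcal{N}$ maps bounded subsets of $C([0,\tau];H^3)\cap C^1([0,\tau];H^2)$ into $C([0,\tau];H^2)$ and is Lipschitz there in a lower-order norm. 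Feeding this into the linear damped-wave propagator, which is bounded from data in $H^3\times H^2$ and source in $L^1_tH^2_x$ into $C([0,\tau];H^3)\cap C^1([0,\tau];H^2)$, and running a Picard iteration --- bounded inside an $H^3\times H^2$ ball and contractive in a lower-order norm --- produces, for a time $\tau>0$ depending only on $\eps$, on the fixed data $(\dr_0,\D)$ on $[0,T]$, and on a bound for $|\dre(t_0)|_{H^3}+|\p_t\dre(t_0)|_{H^2}$, a unique local solution with $\dre\in C([0,\tau];H^3)$, $\p_t\dre\in C([0,\tau];H^2)$; applying the same estimate to the difference of two solutions gives uniqueness on the whole interval of existence.

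For step (ii), the initial conditions~\eqref{IC-Remainder}, namely $\dre(0)=\sqrt{\eps}\,\D(x)$ and $\p_t\dre(0)=0$, give $E_\eps(0)=(1-\eps)|\D|_{H^2}^2+2|\nabla\D|_{H^2}^2+\eps|\D|_{H^2}^2=|\D|_{H^2}^2+2|\nabla\D|_{H^2}^2=M$, and, because $0<\eps<\tfrac12$, $E_\eps(t)$ is comparable (with universal constants) to $|\p_t\dre(t)|_{H^2}^2+\tfrac1\eps|\dre(t)|_{H^3}^2$. On the interval of existence intersected with $[0,T)$, Lemma~\ref{Lm-Unif-Bounds} gives $\tfrac{\d}{\d t}E_\eps\le C(1+E_\eps)(1+\eps E_\eps)$. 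Comparing with the Riccati ODE $y'=C(1+y)(1+\eps y)$, $y(0)=M$ --- most transparently via the substitution $v=\tfrac{1+y}{1+\eps y}$, which reduces it to $v'=C(1-\eps)v$, whence $v(t)\le\tfrac{1+M}{1+\eps M}e^{Ct}$ using $1-\eps<1$ --- and inverting $y=\tfrac{v-1}{1-\eps v}$ yields $E_\eps(t)\le\tfrac{(1+M)e^{Ct}-(1+\eps M)}{1+\eps M-\eps(1+M)e^{Ct}}$; a crude estimate of the numerator, combined with the comparability just noted, gives~\eqref{Bnds-Energ}. This bound is valid precisely while its denominator is positive, i.e. for $t<\tfrac1C\ln\tfrac{1+\eps M}{\eps(1+M)}$, a quantity which is $>0$ since $\eps<1$.

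It remains to glue the pieces together (step (iii)). Because the local existence time $\tau$ of step (i) is bounded below in terms of an upper bound for $|\dre(t_0)|_{H^3}+|\p_t\dre(t_0)|_{H^2}$ --- equivalently for $E_\eps(t_0)$ --- and step (ii) keeps $E_\eps$ finite on $[0,T^\eps)$ with $T^\eps=\min\{T,\tfrac1C\ln\tfrac{1+\eps M}{\eps(1+M)}\}$, a standard continuation argument extends the local solution to all of $[0,T^\eps)$, and the bound on $E_\eps$ then yields~\eqref{Bnds-Energ}. I expect the main obstacle to be the bookkeeping in step (i): one must verify term by term that every quadratic and cubic expression in $\mathcal{S}(\dre)$ and $\mathcal{R}(\dre)$ is controlled in $H^2$ by $|\dre|_{H^3}+|\p_t\dre|_{H^2}$ and by the fixed $\dr_0$-data (using the $H^2$ algebra property and $H^2\hookrightarrow L^\infty$), and that the linear propagator gains exactly the right regularity for the iteration to close at the $H^3\times H^2$ level. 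By contrast, steps (ii) and (iii) are short once Lemma~\ref{Lm-Unif-Bounds} is in hand; the only care needed there is to match the blow-up time of the comparison ODE with the definition of $T^\eps$, and to note that the regularity of $\dr_0$ provided by Proposition~\ref{Prop-HF} keeps the constant $C$ of Lemma~\ref{Lm-Unif-Bounds} finite.
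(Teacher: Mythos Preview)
Your proposal is correct and the ODE-comparison part (your steps (ii)--(iii)) is essentially identical to the paper's: both compute $E_\eps(0)=M$ from~\eqref{IC-Remainder}, both integrate the Riccati inequality $\tfrac{\d}{\d t}E_\eps\le C(1+E_\eps)(1+\eps E_\eps)$ via the substitution $\tfrac{1+E_\eps}{1+\eps E_\eps}$, and both read off $T^\eps$ as the blow-up time of the resulting bound.

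The genuine difference is in your step~(i). The paper does \emph{not} use the linear damped-wave propagator and a Duhamel--Picard iteration; instead it runs a Friedrichs mollifier scheme: it replaces~\eqref{Remainder-Eq} by the regularized system $\eps\p_t\w^\eps_\eta=-\mathcal{J}_\eta\w^\eps_\eta+\mathcal{J}_\eta\Delta\dr^\eps_{R,\eta}+\eps\mathcal{J}_\eta\mathcal{S}(\mathcal{J}_\eta\dr^\eps_{R,\eta})+\eps\mathcal{J}_\eta\mathcal{R}(\mathcal{J}_\eta\dr^\eps_{R,\eta})$, $\p_t\dr^\eps_{R,\eta}=\w^\eps_\eta$, which is an ODE in $H^2$ solvable by Cauchy--Lipschitz; the key observation $\mathcal{J}_\eta^2=\mathcal{J}_\eta$ forces $\mathcal{J}_\eta\dr^\eps_{R,\eta}=\dr^\eps_{R,\eta}$, so the energy computation of Lemma~\ref{Lm-Unif-Bounds} reproduces verbatim at the $\eta$-level, yielding a lifespan bound uniform in $\eta$, and one passes to the limit $\eta\to0$ by compactness. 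The advantage of the paper's route is that no separate bookkeeping is needed to check that $\mathcal{N}$ maps into $H^2$ or that the propagator gains the right regularity --- the a~priori estimate is simply redone on the mollified system and directly gives both existence time and bound in one stroke. Your route is equally valid and perhaps more conceptual (it isolates local existence from the global bound), but it does require the term-by-term $H^2$ checks you flag as the main obstacle; the paper's mollifier approach sidesteps that entirely.
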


\begin{proof}
We employ a  mollifier argument to prove this proposition. For any fixed $\eps > 0$ we can directly construct a system approximating \eqref{Remainder-Eq}-\eqref{IC-Remainder} as follows:
\begin{equation}\label{Appro-WM}
  \left\{
    \begin{array}{c}
      \eps \partial_t \w^\eps_\eta = - \mathcal{J}_\eta \w^\eps_\eta + \mathcal{J}_\eta \Delta \dr^\eps_{R,\eta} + \eps \mathcal{J}_\eta \mathcal{S} ( \mathcal{J}_\eta \dr^\eps_{R,\eta} ) + \eps \mathcal{J}_\eta \mathcal{R} ( \mathcal{J}_\eta \dr^\eps_{R,\eta} )  \,, \\
      \partial_t \dr^\eps_{R,\eta} = \w^\eps_\eta \,, \\
      \dr^\eps_{R,\eta} \big{|}_{t=0} = \sqrt{\eps} \mathcal{J}_\eta \D (x)\,,\ \w^\eps_\eta \big{|}_{t=0} = 0\, ,
    \end{array}
  \right.
\end{equation}
where the mollifier operator $\mathcal{J}_\eta$ is defined as
$$ \mathcal{J}_\eta f = \mathcal{F}^{-1} \big{(}  \mathbf{1}_{|\xi| \leq \frac{1}{\eta}} \mathcal{F} (f) (\xi) \big{)} \, , $$
where the symbol $\mathcal{F}$ denotes  the standard Fourier transform operator and $\mathcal{F}^{-1}$ is the inverse Fourier transform operator. By ODE theory in Hilbert spaces one can prove the existence and uniqueness of the approximate system \eqref{Appro-WM} on the maximal time interval $[0 , T_\eta^\eps )$. Then by the fact $\mathcal{J}_\eta^2 = \mathcal{J}_\eta$ and the uniqueness of \eqref{Appro-WM} we know that $\mathcal{J}_\eta \dr^\eps_{R,\eta} = \dr^\eps_{R,\eta}$ and $\mathcal{J}_\eta \w^\eps_\eta = \w^\eps_\eta$. Thus by the analogous energy estimate shown in Lemma \ref{Lm-Unif-Bounds} applied to the approximate system \eqref{Appro-WM}, one can obtain the following energy inequality for $\dr_{R, \eta}^\eps$ and $\w_\eta^\eps$
\begin{equation}\label{Unf-Bd-eps-eta}
  \frac{\d}{\d t} E_{\eps, \eta} (t) + 3 F_{\eps, \eta} (t) \leq C \big{[} 1 + E_{\eps , \eta} (t) \big] \big[ 1 + \eps E_{\eps, \eta} (t) \big]
\end{equation}
for all $t \in [ 0 , T^\eps_\eta )$, where the positive constant $C > 0$ is independent of $\eps $ and $\eta$, and the energy functionals $E_{\eps , \eta} (t)$, $F_{\eps,\eta} (t)$ are of the same forms as $E_\eps(t)$, $F_\eps(t)$ defined in Section \ref{Sct-Unif-Bounds} (replacing $\dr_R^\eps$ by $\dr^\eps_{R, \eta}$), respectively.

Since $\dr_{R,\eta}^\eps$ satisfies the initial conditions $\partial_t \dr_{R,\eta}^\eps (0, x) = 0$ and $\dr_{R,\eta}^\eps (0,x) =  \sqrt{\eps} \mathcal{J}_\eta \D(x)$, we know that for $\eps \in (0,\frac{1}{2})$
\begin{equation}\label{IV-Bnds}
  \begin{aligned}
    E_{\eps,\eta}(0) =& | \p_t \dr^\eps_{R,\eta} (0 ,\cdot) |^2_{H^2} + ( \tfrac{1}{\eps} - 1 ) | \dr^\eps_{R,\eta} (0, \cdot) |^2_{H^2} \\
    &+ \tfrac{2}{\eps}| \nabla \dr^\eps_{R,\eta} (0,\cdot) |^2_{H^2} + | \p_t \dr^\eps_{R,\eta} (0,\cdot) +  \dr^\eps_{R,\eta} (0,\cdot) |^2_{H^2} \\
    =& ( \tfrac{1}{\eps} - 1 ) |  \sqrt{\eps} \mathcal{J}_\eta \D |^2_{H^2} + \tfrac{2}{\eps}|\sqrt{\eps} \nabla \mathcal{J}_\eta \D |^2_{H^2} + |  \sqrt{\eps} \mathcal{J}_\eta \D |^2_{H^2} \\
    \leq & (1 - \eps) |\D|^2_{H^2} + 2 |\nabla \D|^2_{H^2} + \eps |\D|^2_{H^2} \\
    = &  |\D|^2_{H^2} + 2 |\nabla \D|^2_{H^2} = M < \infty \,.
  \end{aligned}
\end{equation}
Then, one can solve the ODE inequality \eqref{Unf-Bd-eps-eta} with the initial condition \eqref{IV-Bnds}, obtaining that
\begin{equation*}
  \begin{aligned}
    \frac{ 1 + E_{\eps,\eta}(t) }{ 1 + \eps E_{\eps,\eta}(t) } \leq \frac{ 1 + E_{\eps,\eta}(0) }{ 1 + \eps E_{\eps,\eta}(0) } e^{C(1-\eps) t} \leq \frac{ 1 + M }{ 1 + \eps M } e^{C t}
  \end{aligned}
\end{equation*}
holds for all $t \in [0 , T_\eta^\eps )$. Consequently, for all $t \in \Big[ 0 , \min \{ T_\eta^\eps , T, \tfrac{1}{C} \ln \big( \tfrac{1+\eps M}{\eps (1 + M)} \big) \} \Big) $ we know that
\begin{equation}\label{Unf-Bd-EE}
  E_{\eps, \eta} (t) \leq \frac{ (1+M) e^{C t} }{1 + \eps M - \eps (1+M) e^{Ct}} \,.
\end{equation}
Notice that the continuity of $E_{\eps, \eta} (t)$ and the maximality of $T_\eta^\eps > 0$ imply that $$ T_\eta^\eps \geq \tfrac{1}{C} \ln \big( \tfrac{1+\eps M}{\eps (1 + M)} \big) > 0. $$ Hence the inequality \eqref{Unf-Bd-EE} holds for all $t \in [ 0, T^\eps )$ uniformly in $\eta > 0$, where $$T^\eps = \min \{ T , \tfrac{1}{C} \ln \big( \tfrac{1 + \eps M}{\eps ( 1 + M )} \big) \} > 0\,.$$ Finally, we can finish the proof of this proposition by standard compactness methods and taking the limit as $\eta \rightarrow 0$. The uniqueness issue can be reduced to the uniqueness of the damped wave map system \eqref{Wave-Map} which can be obtained by methods analogous to those in the book of Shatah and Struwe \cite{Shatah-Struwe(BOOK)-2000}. For convenience, we omit the details of the proof.

\end{proof}

\noindent {\bf The Proof of Theorem \ref{Main-Thm}.} Now, based on the energy estimate \eqref{Bnds-Energ} in Proposition \ref{Prop-WM}, we verify Theorem \ref{Main-Thm}. We observe that the function
$$ f(\eps) : = \frac{1}{C} \ln \Big( \frac{1 + \eps M}{\eps ( 1 + M ) } \Big) $$
is strictly decreasing in $ \eps \in ( 0 , \frac{1}{2} ) $ and $\lim\limits_{\eps \searrow 0} f(\eps) = + \infty$. Consequently, we can choose
$$ \eps_0 = \min \big\{ \tfrac{1}{2} , \tfrac{1}{ ( 1 + M ) e^{CT} - M } \big\} \in (0 ,\tfrac{1}{2}) $$
such that for any $\eps \in ( 0 , \eps_0 )$
$$ f(\eps) = \frac{1}{C} \ln \Big( \frac{1 + \eps M}{\eps ( 1 + M ) } \Big) > T \,. $$
As a result, for the number $T^\eps$ determined in Proposition \ref{Prop-WM}, we have that $T^\eps \equiv T$ for all $\eps \in (0, \eps_0)$. Then, the inequality \eqref{Bnds-Energ} in Proposition \ref{Prop-WM} implies that
\begin{equation*}
  |\partial_t \dr^\eps_R |^2_{L^\infty(0,T;H^2)} + \tfrac{1}{\eps} | \dr^\eps_R |^2_{L^\infty(0,T;H^3)}  \leq \tfrac{ 2 M e^{C T} }{ 1 + \eps_0 M - \eps_0 ( 1 + M ) e^{C T} } : = C_0 < \infty\,,
\end{equation*}
and the proof of Theorem \ref{Main-Thm} is completed.

\qquad\qquad\qquad\qquad\qquad\qquad\qquad \qquad\qquad\qquad\qquad\qquad\qquad\qquad \qquad\qquad\qquad\qquad\qquad\ \ $\Box$


\section*{Acknowledgement}
The project of this paper was initialized when Ning Jiang visited Arghir Zarnescu at Basque Center for Applied Mathematics (BCAM) in March 2017. They appreciate the hospitality of BCAM. The activity of Ning Jiang on this work was supported by Chinese NSF grant 11471181.

The activity of Arghir Zarnescu on this work was partially supported by a grant of the Ro-manian National Authority for Scientific Research and Innovation, CNCS-UEFISCDI, project number PN-II-RU-TE-2014-4-0657; by the Project of the Spanish Ministry of Economy and Competitiveness with reference MTM2013-40824-P; by the Basque Government through the BERC 2014–2017 program; and by the Spanish Ministry of Economy and Competitiveness MINECO: BCAM Severo Ochoa accreditation SEV-2013-0323.
\bigskip


\end{document}